\newtheorem{theorem}{Theorem}[section]
\newtheorem{theoremI}{Theorem}
\newtheorem{lemma}[theorem]{Lemma}
\newtheorem{prop}[theorem]{Proposition}
\newtheorem{cor}[theorem]{Corollary}
\newtheorem{corI}{Corollary}
\theoremstyle{definition}
\newtheorem{definition}[theorem]{Definition}
\theoremstyle{remark}
\newtheorem{rmk}[theorem]{Remark}
\numberwithin{equation}{section}
\DeclareMathOperator{\SL}{SL}
\DeclareMathOperator{\GL}{GL}
\DeclareMathOperator{\PO}{PO}
\DeclareMathOperator{\SU}{\mathbf{SU}}
\DeclareMathOperator{\U}{\mathbf{U}}
\DeclareMathOperator{\Sp}{Sp}
\DeclareMathOperator{\PSp}{PSp}
\DeclareMathOperator{\PU}{PU}
\DeclareMathOperator{\PGL}{PGL}
\DeclareMathOperator{\Aut}{Aut}
\DeclareMathOperator{\vol}{vol}
\newcommand{\J}{\mathcal{J}_k}
\newcommand{\Jo}{\J^\infty}
\newcommand{\Z}{\mathbb Z}
\newcommand{\Q}{\mathbb Q}
\newcommand{\R}{\mathbb R}
\renewcommand{\H}{\mathbb H}
\newcommand{\Hu}{\mathscr H}
\newcommand{\Ic}{\mathscr I}
\newcommand{\A}{\mathbb A}
\newcommand{\f}{\mathfrak f}
\renewcommand{\O}{\mathcal O}
\renewcommand{\o}{\mathfrak o}
\newcommand{\kun}{\widehat{k}_v}
\newcommand{\oun}{\widehat{\o}_v}
\newcommand{\oP}{\overline{P}_v}
\newcommand{\D}{\mathscr D}
\newcommand{\Af}{\A_\mathrm{f}}
\newcommand{\CP}{\mathfrak{C}_P}
\newcommand{\V}{\mathcal{V}_k}
\newcommand{\Vf}{\V^{\mathrm{f}}}
\newcommand{\Vi}{\V^\infty}
\newcommand{\Ram}{\mathcal{R}}
\newcommand{\id}{\mathrm{id}}
\newcommand{\tA}{\mathrm{A}}
\newcommand{\tC}{\mathrm{C}}
\newcommand{\matr}[4]{\left(\begin{array}{cc}#1 & #2 \\ #3 & #4 \end{array}\right)}
\newcommand{\Hy}{\mathbf H}
\newcommand{\G}{\mathbf G}
\newcommand{\aG}{\overline{\G}}
\newcommand{\CG}{\mathbf C}
\newcommand{\Gs}{\mathscr G}
\newcommand{\cMv}{\overline{\mathscr M}_{\!\! v}}
\newcommand{\Mv}{\overline{\mathrm M}_{v}}
\newcommand{\bs}{\backslash}
\newcommand{\muEP}{\mu^\mathrm{EP}}
\newcommand{\Eul}{\mathscr{E}}
\newenvironment{myarray}[1][1]{%
  \array%
}{%
  \endarray
}
\begin{document}
\title[Quaternionic hyperbolic lattices of minimal covolume]{Quaternionic hyperbolic lattices \\of minimal covolume}

\begin{abstract}
        For any $n>1$ we determine the uniform and nonuniform lattices of the smallest covolume in
        the Lie group $\Sp(n,1)$. We explicitly describe them in terms of the
        ring of Hurwitz integers in the nonuniform case with $n$ even, respectively, of the icosian ring
        in the uniform case for all $n>1$.
\end{abstract}

\author{Vincent Emery and Inkang Kim}
\thanks{The first author is supported by the Swiss National Science Foundation,
        Project number  PP00P2\_157583. The Second author gratefully acknowledges the partial
        support of Grant
NRF-2019R1A2C1083865 and KIAS Individual Grant (MG031408).}

\address{
Bern University of Applied Sciences\\
School of Engineering and Computer Science\\
Quellgasse 21\\
CH-2501 Bienne\\
Switzerland
}

\address{
Korea Institute for Advanced Study\\
School of Mathematics\\
85 Hoegiro, Dongdaemun-gu\\
Seoul 130-722\\
Korea
}

\email{vincent.emery@math.ch}
\email{inkang@kias.re.kr}

\date{\today}

%\subjclass{22E40 (primary); 11E57, 20G30, 51M25 (secondary)}
% Group Theory;  Geometric Topology, Number Theory

\maketitle

\section{Introduction}
\label{sec:intro}

\subsection{The problem}

The purpose of this article is to determine the lattices in the Lie group $G =
\PSp(n,1)$ of minimal covolume, for any integer $n > 1$. For other rank one real simple Lie
groups (namely $G = \PO(n,1)$, and $G = \PU(n,1)$) this problem has been
addressed in several different papers (see for instance
\cite{GehrMart09,Hild07,BelEme,EmeSto}). The result in the case $G=\PGL_2(\R)$ is a classical
theorem of Siegel \cite{Sieg45}. Many of the results mentioned above are
restricted to the class of arithmetic lattices: this allows the use of Prasad's volume formula
\cite{Pra89} along with techniques from Borel-Prasad \cite{BorPra89} as the main ingredient in the proof, 
and we shall adopt the same strategy in this paper.  A significant advantage when treating the
case $G= \PSp(n,1)$ is that all lattices are arithmetic (since superrigidity holds), so that the results
obtained below solve the problem in this Lie group. 
%(see Remark~\ref{rmk:Gamma-1-nonexpl} though).

%\subsection{Lattices in $\Sp(n,1)$}

It will be more convenient to work with lattices in the group $\Sp(n,1)$,
which is a double cover of $\PSp(n,1) = \Sp(n,1)/\left\{ \pm I \right\}$.
Let $\H$ denotes the Hamiltonian quaternions. By definition, $\Sp(n,1)$ is the unitary 
group $\U(V_\R, h)$ of $\H$-linear automorphisms of $V_\R = \H^{n+1}$ preserving the hermitian form
\begin{align}
        h(x, y) &=  - \overline{x_0} y_0 + \overline{x_1} y_1 + \dots +
        \overline{x_n} y_n.
        \label{eq:hermit-n-1}
\end{align}
In Sect.~\ref{sec:Psp} we explain  how our results translate 
back into the original problem in $\PSp(n,1)$, and we discuss their
geometric meaning in terms of quaternionic hyperbolic orbifolds.

We will use the Euler-Poincar\'e characteristic  $\chi$ (defined in the
sense of C.T.C.~Wall) as a measure of the covolume: there exists a normalization
$\muEP$ of the Haar measure on $\Sp(n,1)$ such that $\muEP(\Gamma\bs \Sp(n,1)) =
\chi(\Gamma)$ for any lattice $\Gamma \subset \Sp(n,1)$; see
Sect.~\ref{sec:mu-EP-mu}. The problem is then
to find the lattices $\Gamma \subset \Sp(n,1)$ with minimal value for
$\chi(\Gamma)$. It is usual (and natural) to separate the problem into the
subcases of $\Gamma$ uniform (i.e., the quotient $\Gamma\bs \Sp(n,1)$ being
compact), respectively $\Gamma$ nonuniform.

\subsection{The nonuniform case}
Denote by $\Hu \subset \H$ the ring of Hurwitz integers, which consists of
elements of the form $\alpha_0 + \alpha_1 i + \alpha_2 j +\alpha_3 k \in \H$
with either all $\alpha_i \in \Z$, or all $\alpha_i \in \Z +\frac{1}{2}$. Let
$\Sp(n,1, \Hu)$ be the subgroup $\U(L, h) \subset \U(V_\R, h)$ stabilizing the
lattice $L = \Hu^{n+1} \subset V_\R$.  In matrix notation
%with respect to the standard basis
it corresponds to the set of  elements of $\Sp(n,1)$ with
coefficients in $\Hu$, whence the notation.  The group $\Sp(n,1, \Hu)$ is a
nonuniform lattice of $\Sp(n,1)$ (see Sect.~\ref{sec:BHC-Godement}). 
It is easily checked that it is
normalized by the scalar matrix $g= I (1+ i)/\sqrt{2}$, for which $g^2 \in
\Sp(n,1, \Hu)$ holds. We denote by $\Gamma^0_n$ the subgroup extension
of $\Sp(n,1, \Hu)$ by $g$. Thus $\Gamma^0_n$ contains $\Sp(n,1, \Hu)$ as a subgroup of index $2$. 
For $n=2$ this lattice has been considered in \cite[Prop. 5.8]{KimPar03}. We
will compute the following (see Corollary~\ref{cor:vol-Sp-Hur}):
\begin{align}
        \chi(\Gamma^0_n) = \frac{(n+1)}{2} 
        \prod_{j=1}^{n+1} \frac{2^j + (-1)^j}{4j} |B_{2j}|,
        \label{eq:vol-Gamma-Hu}
\end{align}
where $B_m$ is the $m$-th Bernoulli number.
For the reader's convenience we list the first few values for
$\chi(\Gamma^0_n)$ in Table~\ref{tab:expl}. Note the few distinct prime factors
(namely $p=2,3$) appearing for $n=2$; this compares with $\chi(\SL_2(\Z)) = -1/12$. 
\begin{theoremI}
        \label{thm:nonunif}
       For any $n$ even, the lattice $\Gamma^0_n$ realizes the smallest covolume 
       among nonuniform lattices in $\Sp(n,1)$. Up to conjugacy, it is the unique lattice
       with this property.
\end{theoremI}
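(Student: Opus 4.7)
The starting point is the superrigidity of Corlette and Gromov--Schoen, which guarantees that every lattice in $\Sp(n,1)$ is arithmetic, so the problem reduces to a classification of arithmetic lattices. A nonuniform arithmetic lattice forces the field of definition to be $k = \Q$: any other archimedean place would contribute a compact factor in the restriction of scalars, making the quotient compact. The ambient algebraic $\Q$-group $\G$ is then the unitary group of a Hermitian form $h'$ on $D^{n+1}$, where $D$ is a quaternion algebra over $\Q$ ramified at the archimedean place (so that $\G(\R) \cong \Sp(n,1)$) and $h'$ has Witt index at least one over $\Q$.

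I would then apply Prasad's volume formula \cite{Pra89} combined with the Borel--Prasad classification of maximal arithmetic subgroups \cite{BorPra89}. Every nonuniform arithmetic lattice is contained in the normalizer $\Gamma_P = N_{\G(\Q)}(\Lambda_P)$ of a principal arithmetic subgroup $\Lambda_P = \G(\Q) \cap \prod_v P_v$ attached to a coherent collection $P = (P_v)_v$ of parahorics. Prasad's formula, with the Euler--Poincar\'e normalization, expresses
$$
\chi(\Lambda_P) \;=\; C(n)\cdot\prod_{v\in \Vf} \lambda_v(P_v),
$$
where $C(n)$ depends only on $n$ (it packages $\vol(G/K)$, the Tamagawa number, and the product of the standard zeta factors corresponding to a hyperspecial collection) and $\lambda_v(P_v)$ is a local correction factor determined by the reductive quotient of $P_v$. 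Division by the normalizer index $[\Gamma_P : \Lambda_P]$ then yields $\chi(\Gamma_P)$.

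The minimization proceeds in three stages. \emph{Local optimization:} at each finite place $v$, the factor $\lambda_v(P_v)$ is minimized by choosing $P_v$ to be hyperspecial whenever $\G$ is quasi-split over $\Q_v$, and the unique maximal parahoric otherwise. \emph{Choice of $(D,h')$:} among quaternion $\Q$-algebras ramified at $\infty$, an Euler-product comparison shows that the minimum of $\chi(\Lambda_P)$ over locally optimal $P$ is attained when $D$ is ramified exactly at $\{\infty, 2\}$, i.e.\ when $D$ is the Hamilton quaternions with Hurwitz maximal order; since $n$ is even, the Hermitian form of signature $(n,1)$ on $D^{n+1}$ having Witt index at least one over $\Q$ is unique up to similitude (it is realized by \eqref{eq:hermit-n-1}), so $\Lambda_P$ is $\G(\Q)$-conjugate to $\Sp(n,1,\Hu)$. \emph{Normalizer index:} using Rohlfs' exact sequence, which bounds $[\Gamma_P : \Lambda_P]$ by a subgroup of $H^1(\Q,\CG)$ for the center $\CG \simeq \mu_2$ of $\G$, one shows $[\Gamma_P : \Lambda_P] \leq 2$, with equality attained by the element $g = I(1+i)/\sqrt{2}$. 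These three stages together identify $\Gamma^0_n$ as the unique minimizer up to conjugacy.

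The main obstacle is the detailed arithmetic at the prime $p=2$. It is the unique finite ramified place of $D$, so the local group $\G(\Q_2)$ is a non-quasi-split inner form of type $C_{n+1}$, and its parahoric subgroups and their reductive quotients must be computed directly. Simultaneously, the normalizer index is $2$-torsion, so both the volume bookkeeping in the second stage and the cohomological obstruction to extending local outer automorphisms in the third stage concentrate at the prime $2$, and must be handled together. The hypothesis that $n$ is even enters precisely to secure uniqueness of the isotropic Hermitian form over $\Q$ up to similitude; for odd $n$ a competing class of forms is not ruled out by the same argument.
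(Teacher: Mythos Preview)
Your overall strategy matches the paper's: reduce to arithmetic lattices over $\Q$, write a minimal lattice as the normalizer of a principal $\Lambda_P$, apply Prasad's formula, optimize locally, bound the normalizer index via $H^1(\Q,\mu_2)$, and conclude that $\Ram=\{2,\infty\}$ is optimal. Two points, however, are either misplaced or missing.

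\medskip
\textbf{Where $n$ even actually enters.} Your claim that ``since $n$ is even, the Hermitian form \dots\ is unique up to similitude'' is not where the parity hypothesis is used. In fact two admissible Hermitian spaces of the same dimension over the same $D$ are already $k$-isomorphic for every $n$ (this is a local--global statement for Hermitian forms over quaternion algebras; signatures are forced at the archimedean places and there is no local invariant at finite places). The parity enters instead at the ramified prime $p=2$: the local index of the non-split inner form $^2\tC_{n+1}$ depends on the parity of $n$, and the very special vertex (giving the parahoric of maximal volume) is $\alpha_0$ when $n$ is even but $\alpha_1$ when $n$ is odd. The stabilizer of the Hurwitz lattice $\Hu^{n+1}$ is always a special parahoric of type $\Delta_2\smallsetminus\{\alpha_0\}$ at $p=2$; this is the optimal choice precisely when $n$ is even. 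For $n$ odd the optimal parahoric at $2$ has a different type, which is why $\Gamma^0_n$ is superseded by $\Gamma^1_n$ in that case. So your ``main obstacle'' paragraph has the right location ($p=2$) but the wrong mechanism.

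\medskip
\textbf{Uniqueness up to conjugacy.} Your three stages pin down the defining algebra $D$, the algebraic group $\G$, and the \emph{type} $\Theta=(\Theta_v)$ of the optimal coherent collection. They do not yet show that any two coherent collections of this type yield $\G(\R)$-conjugate $\Lambda_P$. One still needs a genus/class-group argument: the $\aG(k)$-conjugacy classes of such collections are counted by a quotient of $\prod'_v H^1(k_v,\mu_2)$ by $\delta(\aG(k))$ and the local type-stabilizers, and one must check this class group is trivial for $k=\Q$ (and then use surjectivity of $\G(\R)\to\aG(\R)$ to pass from $\aG(k)$- to $\G(\R)$-conjugacy). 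Your sentence ``so $\Lambda_P$ is $\G(\Q)$-conjugate to $\Sp(n,1,\Hu)$'' is exactly the step that requires this, and it does not follow from uniqueness of the Hermitian form alone.
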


At this point we would like to stress the relative simplicity of the description 
of the lattice $\Gamma^0_n$. In comparison, the results of
\cite{Belo04,BelEme,EmeSto}
concerning $\PO(n,1)$ and $\PU(n,1)$ describe the minimal covolume lattices as
normalizers of \emph{principal} arithmetic subgroups, i.e., by using a
local-to-global (adelic) description that heavily depends on Bruhat-Tits theory (see
Sect.~\ref{sec:princ-subgps-and-volumes}). A more concrete description in those cases is only available in low
dimensions (in the form of Coxeter groups) or in a few special cases (see for
instance \cite{Eme-Appendix,Eme-unimd}). Another situation where satisfactory
descriptions are available is the case of a \emph{split} Lie group $G$ (see for instance
\cite{Thilmany}, and \cite{Lub90,SG09} in the positive characteristic case).

The adelic description of arithmetic subgroups is the setting needed to 
apply Prasad's formula, and in this respect the proof of
Theorem \ref{thm:nonunif} (and Theorem \ref{thm:unif}
below) follows the same strategy as in those previous articles.
The improvement in the present case is stated in Theorem
\ref{thm:covolume-U-L-h}, where we have been able to express a large
class of stabilizers of hermitian lattices -- including $\Sp(n,1, \Hu)$ -- as principal
arithmetic subgroups, in particular permitting the computation of their covolumes. This
makes these subgroups more tractable to geometric or algebraic investigation;
for instance, the reflectivity of $\Sp(n,1, \Hu)$ has
already been studied by Allcock in \cite{Allc00}.

%which would be very difficult by considering them 
%only from the viewpoint  of principal arithmetic subgroups. 
%For instance, the reflectivity of $\Sp(n,1, \Hu)$ has already been studied by
%Allcock in \cite{Allc00}.

For $n$ odd, there is a nonuniform lattice of covolume smaller than $\Gamma_n^0$:
\begin{theoremI}
        \label{thm:nonunif-odd}
        Let $n>1$ be odd. There exists a unique (up to conjugacy) nonuniform
        lattice $\Gamma^1_n \subset \Sp(n,1)$ of minimal covolume. It is commensurable with
        $\Gamma^0_n$, and 
        \begin{align}
                \chi(\Gamma^1_n) = 
                \frac{(n+1)}{2} \prod_{j=1}^{n+1} \frac{2^{2j} - 1}{4j} |B_{2j}|
                \prod_{j=1}^{\frac{n+1}{2}} \frac{1}{2^{4j} -1}.
                \label{eq:vol-Gamma-1}
        \end{align}
\end{theoremI}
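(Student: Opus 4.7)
The plan follows the same strategy as for Theorem \ref{thm:nonunif}: we work within the unique commensurability class of nonuniform arithmetic lattices in $\Sp(n,1)$, which is the one associated with the $\Q$-quaternion algebra $D$ satisfying $D \otimes_\Q \R = \H$ (i.e., $D$ ramified at $\{\infty, 2\}$) together with the $\Q$-isotropic hermitian form of signature $(n,1)$. By Borel--Prasad, every maximal arithmetic subgroup in this class is the normalizer of a principal arithmetic subgroup $\Lambda_P$ associated with a coherent family of parahorics $P = (P_v)_v$ in $G(\Af)$. At places $p \neq 2$ the local group $G \otimes \Q_p$ is the split symplectic group $\Sp_{2(n+1)}(\Q_p)$ of type $C_{n+1}$, and hyperspecial parahorics minimize the local contribution to the covolume. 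At $p = 2$, $G \otimes \Q_2$ is the non-quasi-split inner form $\U(D\otimes\Q_2,h')$, whose affine Dynkin diagram possesses several conjugacy classes of maximal parahorics, and this is where the two lattices $\Gamma^0_n$ and $\Gamma^1_n$ will differ.

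To construct $\Gamma^1_n$ I would exhibit a coherent family $P^1$ agreeing with the one yielding $\Gamma^0_n$ at every $p \neq 2$, but whose component $P^1_2$ is a maximal parahoric \emph{different} from the one used for $\Sp(n,1,\Hu)$. For $n$ odd (hence $n+1$ even) such a parahoric exists and can be realized as the stabilizer of an explicit hermitian $\Hu$-lattice $L^1$, for instance one of the form $L^1 = \Hu^{n} \oplus \mathfrak{p}_2$, where $\mathfrak{p}_2$ denotes the unique maximal two-sided ideal of $\Hu$ above $2$ (generated by $1+i$, of reduced norm $2$). The covolume of $\Lambda_{P^1}$, and then of its normalizer $\Gamma^1_n$, would then be computed via Theorem \ref{thm:covolume-U-L-h} together with Prasad's volume formula. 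All local factors away from $p=2$ agree with those in the derivation of \eqref{eq:vol-Gamma-Hu}; the discrepancy at $p=2$ should account exactly for the replacement of $(2^j + (-1)^j)$ by $(2^{2j}-1)$ and for the supplementary factor $\prod_{j=1}^{(n+1)/2}(2^{4j}-1)^{-1}$ in \eqref{eq:vol-Gamma-1}.

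Next I would establish minimality and uniqueness. Enumerating the maximal parahorics in the local Dynkin diagram of $G \otimes \Q_p$ at each finite $p$, one checks via Prasad's formula that (a) at every $p \neq 2$ the hyperspecial choice yields the smallest local contribution, and (b) at $p = 2$ only two candidate parahoric types can possibly minimize the global covolume, namely the ones corresponding to $\Gamma^0_n$ and $\Gamma^1_n$, with the latter strictly smaller precisely when $n$ is odd. The normalizer index of $\Lambda_{P^1}$ inside $\Gamma^1_n$ is then computed by the same class-number argument used to treat $\Gamma^0_n$, yielding the prefactor $(n+1)/2$. Uniqueness up to conjugacy follows from the uniqueness of the maximal arithmetic overgroup of a given principal subgroup, as in \cite{BorPra89} and as in the proof of Theorem \ref{thm:nonunif}.

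The hard part will be the local analysis at $p = 2$: (i) identifying the correct non-hyperspecial maximal parahoric that realizes $\Gamma^1_n$, (ii) verifying that an explicit hermitian $\Hu$-lattice indeed has this parahoric as its local stabilizer, and (iii) computing the corresponding local Euler factors via Bruhat--Tits theory for the ramified quaternion algebra $D \otimes \Q_2$, tracking down the precise origin of the supplementary product. It is at step (iii) that the parity of $n+1$ enters decisively, explaining why the phenomenon is genuinely new in the odd case and why no analogous improvement over $\Gamma^0_n$ is available when $n$ is even.
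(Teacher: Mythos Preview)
Your overall strategy---reduce to principal arithmetic subgroups via Borel--Prasad, identify the optimal parahoric at each place, and compute the covolume through Prasad's formula---is exactly right and matches the paper.  However, two concrete points in your outline are genuine gaps.

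First, your opening premise is false: there is \emph{not} a unique commensurability class of nonuniform lattices in $\Sp(n,1)$.  Any quaternion $\Q$-algebra $D$ ramified at $\infty$ (together with an odd number of finite primes) produces such a class; the condition $D\otimes_\Q\R\cong\H$ only forces ramification at $\infty$, not at $2$.  So you must \emph{prove} that the minimal-covolume nonuniform lattice has defining algebra ramified exactly at $\{\infty,2\}$.  The paper does this in Proposition~\ref{prop:min-nonunif}: one bounds the normalizer index by $2^{\#\Ram}$ (Lemma~\ref{lemma:index}) and observes that the product of local factors $\prod_{v\in\Ram} e'(P_v)/2$ is minimized when $\Ram=\{2\}$.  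Without this step your argument only compares candidates within a single commensurability class.

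Second, your plan to realize $\Gamma^1_n$ as the stabilizer of an explicit $\Hu$-lattice such as $\Hu^n\oplus\mathfrak{p}_2$, and then invoke Theorem~\ref{thm:covolume-U-L-h}, does not work as stated.  That theorem applies only to lattices ``of maximal type'', and Lemma~\ref{lemma:stab-spec} shows that for any such lattice the local stabilizer at a ramified place is of type $\Delta_v\smallsetminus\{\alpha_0\}$---precisely the parahoric underlying $\Gamma^0_n$, not the $\alpha_1$-type you need.  The paper in fact remarks explicitly (just after the statement of Theorem~\ref{thm:nonunif-odd}) that no concrete lattice description of $\Gamma^1_n$ was found.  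Instead, the paper constructs $\Gamma^1_n$ abstractly by \emph{declaring} the coherent collection $P=(P_v)$ with $P_v$ hyperspecial for $v\neq 2$ and $P_2$ of type $\Delta_2\smallsetminus\{\alpha_1\}$, then computes $e'(P^1_2)$ directly from the Bruhat--Tits data (Lemma~\ref{lemma:vol-eP-max}) and obtains \eqref{eq:vol-Gamma-1} from $\chi(\Gamma^1_n)=\dfrac{e'(P^1_2)}{e'(P^0_2)}\,\chi(\Gamma^0_n)$.  If you wish to pursue the explicit-lattice route you would need a new local analysis replacing Lemma~\ref{lemma:stab-spec}; this is an interesting improvement but goes beyond what the paper establishes.
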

%\begin{rmk}
        %\label{rmk:Gamma-1-nonexpl}
For the lattice $\Gamma_n^1$ we did not manage to find an alternative to
the construction relying on principal arithmetic subgroups. Thus a possible
improvement of our result would be to obtain a more concrete description
for it.
%\end{rmk}
\begin{rmk}
        \label{rmk:Gamma-s}
        The notation has been chosen so that $\Gamma^s_n$ denotes the nonuniform 
        lattice of the \underline{s}mallest covolume for any $n$, when setting $s = (n \mod 2)
        \in \left\{ 0,1 \right\}$.
\end{rmk}

\begin{table}
        \centering
       $$ 
       \begin{myarray}[1.3]{lcccc}
                \hline
                n= & 2 & 3 & 4 & 5\\ \hline \\[-5pt]
                \chi(\Gamma^0_n) & \dfrac{1}{2^{11}\cdot 3^{3}} &
                \dfrac{17}{2^{14}\cdot 3^5\cdot 5} & \dfrac{17\cdot
                31}{2^{19}\cdot 3^{6}\cdot 11} 
                & \dfrac{17\cdot 31 \cdot 691}{2^{22}\cdot 3^{7}\cdot5\cdot
        7\cdot 11} \\[10pt] \hline \\[-5pt]
        \chi(\Gamma^1_n) &  &
                \dfrac{1}{2^{14}\cdot 3^2\cdot 5^2} & &
                \dfrac{31 \cdot 691}{2^{22}\cdot 3^{3}\cdot5^3 \cdot 7\cdot 13} 
                \\[10pt] \hline \\[-5pt]
        \chi(\Delta_n) & \dfrac{67}{2^{10}\cdot 3^{3}\cdot 5^3\cdot7} &
                \dfrac{19^2\cdot 67}{2^{13}\cdot 3^{5}\cdot 5^4\cdot7} &
                \dfrac{19^2\cdot 67\cdot191\cdot2161}{2^{18}\cdot 3^{6}\cdot
        5^5\cdot7\cdot11} &
                %\dfrac{7939510008126649607}{2510734786560000000}
        \\[10pt] \hline
        \end{myarray}$$
        \vspace{3pt}
        \caption{Some explicit values for $n \le 5$}
        \label{tab:expl}
\end{table}
\subsection{The uniform case}

%Theorem \ref{thm:covolume-U-L-h} also applies to the \emph{uniform} lattice of
%minimal covolume, which we describe now. 

Let $k = \Q(\sqrt{5})$, and let $\Ic$
denote the \emph{icosian ring}, i.e., $\Ic$ is the unique (up to conjugacy) 
maximal order in the quaternion $k$-algebra $\left( \frac{-1, -1}{k} \right)$ (see
\cite[Sect.~8.2]{Conway-Sloane}, or \cite[p.141]{Vign80}). We have an inclusion
$\Ic \subset \H$.
The following hermitian form resticts to the standard  $\Ic$-lattice
$\Ic^{n+1}$ in $V_\R = \H^{n+1}$:
\begin{align}
        \label{eq:h-omega}
        h(x, y) &=  \tfrac{1-\sqrt{5}}{2} \overline{x_0} y_0 + \overline{x_1} y_1 + \dots +
        \overline{x_n} y_n.
\end{align}
The stabilizer $\U(\Ic^{n+1}, h)$ is a uniform lattice in $\Sp(n,1)$ (see
Sect.~\ref{sec:BHC-Godement}), which we will denote by the symbol $\Delta_n$ in the
following.

\begin{theoremI}
        \label{thm:unif}
        For any $n > 1$, the lattice $\Delta_n$ realizes the smallest covolume 
        among uniform lattices in $\Sp(n,1)$. Up to conjugacy, it is the unique lattice
        with this property. Its Euler characteristic is given by 
        \begin{align}
                \chi(\Delta_n) = (n+1) \prod_{j=1}^{n+1} \frac{\zeta_k(1-2j)}{4}, 
                \label{eq:vol-Lambda-Ic}
       \end{align}
       where $\zeta_k$ denotes the Dedekind zeta function of
       $k=\Q(\sqrt{5})$.
\end{theoremI}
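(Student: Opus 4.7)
The plan is to follow the strategy of \cite{Belo04,BelEme,EmeSto}, adapted to the
quaternionic setting, and to exploit the identification of stabilizers of hermitian
lattices with principal arithmetic subgroups provided by
Theorem~\ref{thm:covolume-U-L-h}. Since $n>1$, superrigidity guarantees that every
lattice $\Gamma \subset \Sp(n,1)$ is arithmetic, so $\Gamma$ arises (up to
commensurability) from a simply connected $k$-group $\G = \SU(V,h)$, where $k$ is a
totally real number field, $D$ is a quaternion $k$-algebra ramified at every
archimedean place of $k$, and $h$ is a hermitian form of rank $n+1$ on the right
$D$-module $V$, of signature $(n,1)$ at one distinguished real place $v_0$ and
positive definite at every other real place. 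Cocompactness of $\Gamma$ then forces
either $k \neq \Q$, or that $h$ be $k$-anisotropic.

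First I would replace $\Gamma$ by the unique maximal arithmetic lattice
$\Gamma^{\max}$ of its commensurability class, for which
$\chi(\Gamma^{\max}) \le \chi(\Gamma)$, thereby reducing the problem to the maximal
case. By \cite{BorPra89}, $\Gamma^{\max}$ is the normalizer in $\Sp(n,1)$ of a
principal arithmetic subgroup $\Lambda_P$ attached to a coherent collection
$P=(P_v)_{v\in\Vf}$ of parahoric subgroups, and $\chi(\Lambda_P)$ is given by
Prasad's volume formula \cite{Pra89} as
\begin{align*}
        \chi(\Lambda_P) \;=\; C_n \cdot |d_k|^{(n+1)(2n+3)/2}
        \prod_{j=1}^{n+1}\zeta_k(1-2j) \cdot \prod_{v \in \Vf} \lambda_v,
\end{align*}
with $C_n$ an explicit normalization constant and local factors $\lambda_v \ge 1$
that equal $1$ precisely when $P_v$ is hyperspecial (which in particular requires
$D$ to split at $v$ and $h$ to admit a self-dual lattice at $v$). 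The normalizer
index $[\Gamma^{\max}:\Lambda_P]$ is controlled by the class-field-theoretic
estimate of Borel--Prasad.

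The central step is a lower bound analysis: the discriminant prefactor grows
rapidly with $|d_k|$ and $[k:\Q]$, and Brauer--Siegel-type bounds amplify this
growth via the zeta product; a case analysis over Odlyzko's tables of
small-discriminant totally real fields isolates $k = \Q(\sqrt{5})$ (of discriminant
$5$) as the unique candidate among fields of degree $\ge 2$. The case $k = \Q$
requires a separate treatment: cocompactness then demands $h$ to be
$\Q$-anisotropic, which by the Hasse principle for hermitian forms over quaternion
algebras forces either $D$ to ramify at some finite prime, or at least one $P_v$ to
be non-hyperspecial; in each such configuration an explicit computation of the
corresponding $\lambda_v$ must be shown to make the resulting $\chi$ strictly exceed
$\chi(\Delta_n)$. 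Once $k=\Q(\sqrt{5})$ is fixed, minimality forces
$D=\left(\frac{-1,-1}{k}\right)$ (ramified exactly at the two real places) and
$P_v$ hyperspecial at every finite $v$, pinning down $\Lambda_P$ uniquely up to
conjugacy.

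With these choices, Theorem~\ref{thm:covolume-U-L-h} identifies $\Lambda_P$ with
$\U(\Ic^{n+1},h) = \Delta_n$ for the form \eqref{eq:h-omega}, and a direct check
shows that $\Delta_n$ equals its own normalizer in $\Sp(n,1)$ (the potential outer
normalizing elements coming from $H^1(k,C)$, with $C$ the center of $\G$, are
already realized inside $\Delta_n$ through the unit icosians). The formula
\eqref{eq:vol-Lambda-Ic} then follows from Prasad's formula with all $\lambda_v=1$,
after collecting the normalization constants and the contribution from the center.
The main obstacle I anticipate is the systematic exclusion of $k=\Q$: since
non-hyperspecial parahoric choices can partially compensate for the small base
discriminant, the argument requires explicit numerical bookkeeping on the local
factors $\lambda_v$ rather than a soft asymptotic bound.
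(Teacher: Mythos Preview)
Your overall strategy matches the paper's: reduce to maximal lattices, apply Prasad's formula together with the Borel--Prasad index bound (Lemma~\ref{lemma:index}), use discriminant growth and Odlyzko's bounds to isolate $k=\Q(\sqrt{5})$, and then verify that $\Delta_n$ is its own normalizer and is unique up to conjugacy.

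However, the ``main obstacle'' you anticipate---the case $k=\Q$---does not exist. No uniform lattice in $\Sp(n,1)$ is defined over $\Q$: by Godement's criterion (see Sect.~\ref{sec:BHC-Godement}), an admissible $\Q$-group $\G=\U(V,h)$ is always isotropic, since its trace form $q_h$ is a rational quadratic form in $4(n+1)\ge 8$ variables and hence isotropic by Meyer's theorem. Thus the ``separate treatment'' and the ``explicit numerical bookkeeping on local factors'' over $\Q$ that you flag as the hard part are simply not needed; the minimization runs directly over totally real fields of degree $d\ge 2$, exactly as in Proposition~\ref{prop:min-unif}.

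Two smaller corrections. First, there is no \emph{unique} maximal lattice in a commensurability class; what is used is only that a lattice of minimal covolume is itself maximal, hence of the form $N_{\G(\R)}(\Lambda_P)$ by \cite[Prop.~1.4]{BorPra89}. Second, your sketches of the normalizer and uniqueness steps are too loose. That $\Delta_n$ equals its own normalizer is not an ad~hoc check on unit icosians but a consequence of Lemma~\ref{lemma:index} with $\Ram=\emptyset$, $h_k=1$, and $|U_k^+/U_k^2|=1$ for $k=\Q(\sqrt{5})$ (the fundamental unit has norm $-1$); and uniqueness up to conjugacy is the class-group computation of Sect.~\ref{sec:unique}, showing $\CP=1$, not merely the observation that the local types are determined.
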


\begin{rmk}
        \label{rmk:zeta_k}
        The special values $\zeta_k(1-2j)$ appearing in \eqref{eq:vol-Lambda-Ic} are known
        to be rational by the Klingen-Siegel theorem (more generally, for any totally real $k$),
        and they can be precisely evaluated (see \cite[Sect.~3.7]{Serr71}).
        A list for $j=1,\ldots,5$ is given for instance in
        \cite[Table~2]{Eme-Euler2}, from which we obtain the explicit values for
        $\chi(\Delta_n)$
        listed in Table~\ref{tab:expl}. We omit $n=5$ for reason of space.
        %Together with the additional value $\zeta_k(-11) =
        %\frac{795\,286\,411}{16\,380}$ needed for $n=5$,
        %this permits to obtain the explicit values for $|\chi(\Lambda_n)|$
        %listed in Table~\ref{tab:expl}.
\end{rmk}

\subsection{Numerical values, growth}

We can now compare the nonuniform and uniform lattices, and study the asymptotic
of their covolumes with respect to the dimension.
We give a few numerical values in Table \ref{tab:both}. One sees that
$\chi(\Gamma^s_n)$ and $\chi(\Delta_n)$ are very close for $n=2$, but then  
$\chi(\Delta_n)$ starts growing much faster than $\chi(\Gamma^s_n)$ (which also
grows with $n>5$). More precisely, we can state the following result, which
essentially follows from Theorems~\ref{thm:nonunif}--\ref{thm:unif}; see 
Sect.~\ref{sec:proof-growth} for the discussion of the proof.
\begin{corI}
        Each of the sequences $\chi(\Gamma^s_n)$, $\chi(\Delta_n)$, and
        $\chi(\Delta_n)/\chi(\Gamma^s_n)$ grows super-expon\-entially as
        $n \to \infty$.
        \label{cor:growth}
\end{corI}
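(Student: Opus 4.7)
The approach is to take logarithms of the explicit product formulas \eqref{eq:vol-Gamma-Hu}, \eqref{eq:vol-Gamma-1}, and \eqref{eq:vol-Lambda-Ic}, and apply Stirling's formula to each factor. By ``super-exponential'' I understand that $\log\chi(\cdot)$ grows faster than any linear function of $n$; the plan is to show more precisely that both $\log\chi(\Gamma^s_n)$ and $\log\chi(\Delta_n)$ grow like a constant times $n^2\log n$, with different constants so that the ratio is also super-exponential.

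First I would handle $\chi(\Gamma^s_n)$. The classical identity $|B_{2j}| = 2(2j)!\,\zeta(2j)/(2\pi)^{2j}$ combined with $\zeta(2j)\to 1$ and Stirling yields $\log|B_{2j}| = 2j\log j + O(j)$. Substituting this into \eqref{eq:vol-Gamma-Hu} (and similarly \eqref{eq:vol-Gamma-1}), the factors $2^j+(-1)^j$, $2^{2j}-1$, and $2^{4j}-1$ contribute only $O(n^2)$ to the total logarithm, leaving
$$\log \chi(\Gamma^s_n) = \sum_{j=1}^{n+1} 2j\log j + O(n^2) = n^2\log n + O(n^2).$$

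For $\chi(\Delta_n)$, I would invoke the functional equation of the Dedekind zeta function of the totally real quadratic field $k=\Q(\sqrt{5})$ (with $[k:\Q]=2$ and discriminant $5$), which gives
$$\zeta_k(1-2j) = 5^{2j-1/2}\left(\frac{2(2j-1)!}{(2\pi)^{2j}}\right)^{2}\zeta_k(2j),$$
with $\zeta_k(2j)\to 1$. Applying Stirling to the double factorial factor yields $\log|\zeta_k(1-2j)| = 4j\log j + O(j)$, and then \eqref{eq:vol-Lambda-Ic} gives $\log \chi(\Delta_n) = 2n^2\log n + O(n^2)$. Subtracting the two asymptotics produces $\log\bigl(\chi(\Delta_n)/\chi(\Gamma^s_n)\bigr) = n^2\log n + O(n^2)$, again super-exponential.

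The main obstacle is purely bookkeeping: one must verify that all the exponential-order corrections (powers of $2$, the discriminant factor $5^{2j}$, and the tails of $\zeta$ and $\zeta_k$ at even positive integers) stay confined within the $O(n^2)$ remainder and do not affect the dominant $n^2\log n$ term. Once Theorems~\ref{thm:nonunif}--\ref{thm:unif} supply the closed-form expressions, the argument amounts to a routine comparison of two standard asymptotic formulas for special zeta values.
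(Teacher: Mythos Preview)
Your argument is correct and in substance identical to the paper's: the factorial-type growth of $(2j-1)!$ (equivalently of $|B_{2j}|$ via the relation you quote) dominates all the exponential-order factors. The paper phrases this more tersely by going back to the Prasad formula \eqref{eq:Prasad-zeta}, which exhibits in each covolume the factor
\[
C(n)^{d}=\Bigl(\prod_{j=1}^{n+1}\frac{(2j-1)!}{(2\pi)^{2j}}\Bigr)^{d},\qquad d=[k:\Q],
\]
so that $\chi(\Gamma^s_n)$ carries $C(n)$ and $\chi(\Delta_n)$ carries $C(n)^2$, and then simply observes that $C(n)$ grows super-exponentially while the remaining factors in $\chi(\Gamma^s_n)$ (zeta values at positive even integers, local factors $e'(P_v)$) grow at most exponentially. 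Your version unwinds the same content through Stirling and the functional equation and obtains the sharper quantitative statement $\log\chi(\Gamma^s_n)\sim n^{2}\log n$, $\log\chi(\Delta_n)\sim 2n^{2}\log n$; this is a bit more work but gives more information. The only cosmetic point is that your ``main obstacle'' is not really an obstacle: the bounds $\sum_{j\le n+1}\log(2^{j}\pm 1)=O(n^{2})$, $\sum_{j\le n+1}\log 5^{2j}=O(n^{2})$, and $\zeta(2j),\zeta_k(2j)=1+O(2^{-2j})$ are immediate.
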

Geometric lower bounds -- by means of embedded balls  -- for the volume of
quaternionic hyperbolic manifolds have been obtained by Philippe in her thesis
(see \cite[Cor.~5.2]{PhilZ}), and in \cite[Cor.~5.3]{KimPar03} 
for noncompact manifolds. In contrast to Corollary~\ref{cor:growth}, these bounds
decrease fast with the dimension. 

%Let us finally note that the smallest overall
%minimum ($n$ arbitrary) is given by 

%It is clear from  Corollary \ref{cor:growth} that there is an overall minimal
%value for $\chi(\Gamma)$ with $\Gamma \subset \Sp(n,1)$ and  

It is clear from Corollary~\ref{cor:growth} that the proof of the next
result now follows by inspecting a finite number of values. 

\begin{corI}
        \label{cor:smallest-nonunif}
        For $n=2$ the lattice of the smallest covolume in $\Sp(n,1)$ is 
        uniform. For any $n>2$ this lattice is nonuniform. 
        The smallest Euler characteristic of a lattice in $\Sp(n,1)$ (with $n>1$
        arbitrary) is given by $\chi(\Gamma^1_5)$.
\end{corI}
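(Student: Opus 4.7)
The plan is to reduce each of the three claims to a finite check using Corollary~\ref{cor:growth}. The super-exponential divergence of the ratio $\chi(\Delta_n)/\chi(\Gamma^s_n)$ produces an explicit $N_1$ beyond which $\chi(\Delta_n) > \chi(\Gamma^s_n)$, so the minimal-covolume lattice in $\Sp(n,1)$ is nonuniform for every $n \geq N_1$. The super-exponential divergence of $\chi(\Gamma^s_n)$ produces an $N_2$ beyond which $\chi(\Gamma^s_n) > \chi(\Gamma^1_5)$. Setting $N = \max(N_1, N_2)$, the statement amounts to comparing finitely many rational numbers in the range $2 \leq n < N$.

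To make $N_1$ and $N_2$ small and explicit, I would feed the standard lower bound $|B_{2j}| \geq 2(2j)!/(2\pi)^{2j}$ into the product formulas~\eqref{eq:vol-Gamma-Hu}--\eqref{eq:vol-Gamma-1}, together with the analogous estimate for $\zeta_k(1-2j)$ obtained from the functional equation of $\zeta_k$, plugged into~\eqref{eq:vol-Lambda-Ic}. Since the factor contributed by $\zeta_k$ grows faster than the corresponding Bernoulli factor in~\eqref{eq:vol-Gamma-Hu}--\eqref{eq:vol-Gamma-1}, the uniform covolume $\chi(\Delta_n)$ outstrips $\chi(\Gamma^s_n)$ starting from some low single-digit $n$, and the nonuniform covolumes themselves become monotonically increasing shortly after $n=5$.

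With a concrete $N$ in hand, the remaining verification is immediate from the values collected in Table~\ref{tab:expl}, extended with a few more rows computed from the stated formulas via the rational values of $B_{2j}$ and $\zeta_k(1-2j)$ recalled in Remark~\ref{rmk:zeta_k}. The tabulated data establishes $\chi(\Delta_2) < \chi(\Gamma^0_2)$ (uniform wins for $n=2$), $\chi(\Gamma^s_n) < \chi(\Delta_n)$ for $n = 3, 4, 5$ (nonuniform wins), and the minimum $\chi(\Gamma^1_5)$ of the nonuniform sequence within the checked range; the asymptotic step then ensures nothing smaller appears for $n \geq N$. The only technical obstacle—given that Corollary~\ref{cor:growth} is in hand—is producing a clean threshold $N$, which is a routine Stirling-type estimate rather than a conceptual difficulty.
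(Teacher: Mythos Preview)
Your proposal is correct and follows essentially the same approach as the paper: the authors simply remark that, given Corollary~\ref{cor:growth}, the result reduces to inspecting a finite number of values, and they point to the numerical data in Table~\ref{tab:both} (which already extends to $n=20$) for the verification. Your write-up is more explicit about how to extract a concrete threshold $N$ from the Bernoulli/zeta estimates, but the underlying argument is the same.
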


%Lubotzky asked in \cite{Lub90} if a lattice of minimal covolume in a Lie group over a
%local field is typically uniform or not. It has now been observed in several
%situations that -- except in low dimensions -- the (irreducible) lattices of minimal covolume
%are nonuniform (as long as such lattices exists; see ).
%Corollary~\ref{cor:smallest-nonunif} shows that $\Sp(n,1)$ follows the same
%behaviour. 

%It turns out that the uniform lattice has the smallest covolume only 
%when $n=2$. We state this in the following proposition. Note that a similar
%phenomena happens in the case $G = \PO(n,1)$, where the arithmetic
%lattice of minimal covolume is nonuniform unless $n\le4$.
%Similar phenomenon in real case $n=2$ vs $n>2$.

%Omit proof: similar to \cite{BelEme}. Reduces Prop. 1.6 to finite evaluation.

\begin{table}
        \centering
       $$ 
       \begin{myarray}[1.3]{rll}
                \hline
                n & \multicolumn{1}{c}{\chi(\Gamma^s_n)} &
                \multicolumn{1}{c}{\chi(\Delta_n)} \\ \hline 
                2 & 1.808 \times 10^{-5} & 2.769 \times 10^{-6} \\
                3 & 2.712 \times 10^{-7} & 2.777 \times 10^{-6} \\
                4 & 1.253 \times 10^{-7}  & 2.171 \times 10^{-4} \\
                5 & 1.662 \times 10^{-8} & 3.162   \\
                %6 &  & 1.748\ldots \cdot 10^7  \\
                10 & 1.736 \times 10^8 & 5.771 \times 10^{64} \\
                15 & 8.624 \times 10^{55} & 3.510 \times 10^{218} \\
                20 & 1.654 \times 10^{151}  & 1.833 \times 10^{478} \\
                \hline
        \end{myarray}$$
        \vspace{3pt}
        \caption{Some approximate values}
        \label{tab:both}
\end{table}

\subsection{Quaternionic hyperbolic orbifolds}
\label{sec:Psp}

        Let $\pi: \Sp(n,1) \to \PSp(n,1)$ denote the projection. 
        Any lattice $\Gamma' \subset \PSp(n,1)$ is the image of a lattice $\Gamma =
        \pi^{-1}(\Gamma')$ that contains the center $\left\{ \pm I \right\}$. Then we
        have $\chi(\Gamma') = 2 \chi(\Gamma)$. It follows that $\Gamma'$ is of
        minimal covolume in $\PSp(n,1)$ if and only if so is $\Gamma$ in $\Sp(n,1)$
        (note that a lattice of minimal covolume in $\Sp(n,1)$ necessarily
        contains the center $\left\{ \pm I \right\}$).

        The group $\PSp(n,1)$ identifies with the 
        %orientation-preserving
        isometries of the quaternionic hyperbolic $n$-space $\Hy_\H^n$.
        For any lattice $\Gamma' \subset \PSp(n,1)$ we consider the finite-volume
        quaternionic hyperbolic orbifold  $M = \Gamma'\bs\Hy_\H^n$.
        Alternatively, we may write $M$ as the quotient $M = \Gamma\bs\Hy_\H^n$,
        where $\Gamma = \pi^{-1}(\Gamma')$. Then the \emph{orbifold}
        Euler-Poincar\'e characteristic of $M$ is given by
        $\chi(M) = \chi(\Gamma') = 2 \chi(\Gamma)$. In case $\Gamma'$ is
        torsion-free, $M$ is a quaternionic hyperbolic \emph{manifold}, and $\chi(M)$
        corresponds to the usual (i.e., topological) Euler characteristic. The volume of the orbifold $M$ is
        proportional to $\chi(M)$ (see below). Thus,
        Theorems~\ref{thm:nonunif}--\ref{thm:unif} determine the quaternionic 
        hyperbolic orbifolds (compact or noncompact) of the smallest volume.

        The choice of a normalization of the volume form on $\Hy^n_\H$ induces
        a volume form on its compact dual, i.e., on the quaternionic projective space
        $\H P^n$. For the induced volume on a quotient $M = \Gamma'\bs\Hy^n_\H$
        we have:
        \begin{align}
                \vol(M) = \frac{\vol(\H P^n)}{n+1} \chi(M),
                \label{eq:vol-chi-M}
        \end{align}
        where $n+1$ appears as the Euler characteristic of $\H P^n$.

\subsection{Outline}
The classification of arithmetic subgroups in $\Sp(n,1)$ is discussed in
Sect.~\ref{sec:arithm-sbgp}. In Sect.~\ref{sec:parah-Galois-cohom} we
recall some materials from Bruhat-Tits theory, in particular to prepare the
discussion of Prasad's volume formula in
Sect.~\ref{sec:princ-subgps-and-volumes}.
Sect.~\ref{sec:stabilizers-lattices} deals with lattices that are defined
as stabilizers of hermitian modules. The proofs of the results stated in
the introduction are contained in Sect.~\ref{sec:maximal-sbgps}, with the
exception of the uniqueness, which is proved in Sect.~\ref{sec:unique}.

\subsection*{Acknowledgement} 
We would like to thank Jeff Meyer for helpful correspondence concerning the
classification of arithmetic subgroups in $\Sp(n,1)$, and Ruben B\"ar for his
help with the icosian ring. The first author thanks the KIAS for hospitality and 
financial support during two short stays in Korea, and the HIM in Bonn and the
organizers of the program ``Periods in Number Theory, Algebraic Geometry and
Physics'' for providing a nice working atmosphere. 
We also thank the referee for a careful reading and many useful remarks.

\section{Arithmetic subgroups in $\Sp(n,1)$}
\label{sec:arithm-sbgp}

\subsection{Admissible groups}
\label{admissible-groups}

Let $k$ be a number field, and consider
an absolutely simple algebraic $k$-group $\G$ such that
\begin{align}
        \G(k \otimes_\Q \R) \cong \Sp(n,1) \times K
        \label{eq:admissibility-cond}
\end{align}
for some compact group $K$. 
Then $\G$ is simply connected of type $\tC_{n+1}$, and $k$ is totally real.
Moreover, we can fix an embedding $k \subset \R$ so that $\G(\R)$ identifies with
$\Sp(n,1)$. 
It follows from the classification of simple algebraic group (see \cite{Tits66}
and Remark~\ref{rmk:SU-U} below) 
that $\G$ is isomorphic  
        over $k$  to some unitary group $\U(V, h)$,  where
\begin{itemize}
        \item $D$ is a quaternion algebra over $k$,
                with the standard involution $x \mapsto \overline{x}$.
        \item $V$ is a right $D$-vector space; 
        \item $h$ is a nondegenerate hermitian form on $V$ (sesquilinear with respect to
                the standard involution).
\end{itemize}
Such a $k$-group $\G = \U(V, h)$ satisfying \eqref{eq:admissibility-cond} will be called
\emph{admissible}, and in this case we shall use the same terminology for 
the hermitian space $(V, h)$. We call $D$ the \emph{defining algebra} of $\G$.
Facts concerning quaternion algebras will be recalled along the lines; we refer
to \cite{Vign80} or \cite[Ch.~2 and 6]{MaclReid03}.

\begin{rmk}
       For any field extension $K/k$ we have by definition $\G(K) = \U(V_K, h)$, where
       $V_K = V \otimes_k K$. The latter is seen as a $D_K$-module, for $D_K = D
       \otimes_k K$. In particular, we can use the notation
       $\U(V_k, h)$ to denote the $k$-points $\G(k)$.   
\end{rmk}

\begin{rmk}
        \label{rmk:SU-U} 
        In \cite[p.~56]{Tits66} Tits describes the classification in terms of the \emph{special} unitary
        group $\SU$, however, in our case $\SU = \U$ since symplectic transformations have
        determinant~$1$.
        %see Corollary \ref{cor:}.
\end{rmk}

%For a field extension $K/k$ we denote by $\G_K$ the algebraic $K$-group obtained by scalar extension.
%\begin{lemma}
        %Let $\G = \U(V, h)$ admissible over $k$, and $K/k$ be a field extension. Then 
       %$\G_K$ is a split group if and only if the quaternion algebra $D_K$ splits. 
        %\label{lemma-G_K-splits}
%\end{lemma}
%\begin{proof}
        %\dots
        %We postpone the proof of the converse implication to the more general
%\end{proof}

\subsection{Admissible defining algebras}
\label{sec:condition-on-D-h-unique}
We denote by $\V = \Vi \cup \Vf$ the set of (infinite or finite) places of $k$,
and, for any $v \in \V$, by $D_v$  the quaternion algebra $D_{k_v} = D \otimes_k k_v$. The
algebra $D$ is completely determined by the set of places $v \in \V$ where it
ramifies (i.e., for which $D_v$ is a division algebra), and the set of such places
is of even (finite) cardinality. There is no other obstruction to the
existence of a quaternion algebra $D$ with prescribed localizations
$\left\{ D_v \, |\, v \in V \right\}$; see \cite[Sect.~7.3]{MaclReid03}.

Let $\G = \U(V, h)$ as above, with $V$ over $D$. The following (well-known)
result appears as a special case of Lemma~\ref{lemma:G(R)-split} below. 
Recall that $D_v$ is said \emph{split} if it is isomorphic to $M_2(k_v)$, and
this happens exactly when $D_v$ is not ramified. 
\begin{lemma}
        \label{lemma:G-splits-D-splits}
        Let $v \in \V$. The algebraic group $\G_{k_v}$ (obtained by scalars extension)
        splits if and only if $D_v$ splits (i.e., is not ramified).
\end{lemma}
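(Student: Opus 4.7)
The plan is to argue the two implications separately, using Morita equivalence in the split case and a rank/Witt-index computation in the nonsplit case.

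For the ``if'' direction, assume $D_v$ splits, so that we may fix an isomorphism $D_v \cong M_2(k_v)$ under which the standard (quaternionic) involution corresponds to the symplectic involution $X \mapsto \mathrm{adj}(X)$ on $M_2(k_v)$. By Morita equivalence, every right $D_v$-module is of the form $W \otimes_{k_v} D_v$ for a $k_v$-vector space $W$ of dimension $2 \dim_{D_v} V = 2(n+1)$, and a nondegenerate $D_v$-hermitian form on $V_{k_v}$ corresponds to a nondegenerate alternating $k_v$-bilinear form on $W$. Under this correspondence $\U(V_{k_v}, h)$ is identified with the symplectic group $\Sp(W) \cong \Sp_{2(n+1)}$, which is a split, simply connected algebraic group of type $\tC_{n+1}$. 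Hence $\G_{k_v}$ splits.

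For the ``only if'' direction, suppose $D_v$ is a division algebra. I would diagonalize the hermitian form as $h \cong \langle a_1, \ldots, a_{n+1}\rangle$ with $a_i \in k_v^\times$. Since the reduced norm $N \colon D_v \to k_v$ is anisotropic when $D_v$ is division (that is, $N(x)=0$ forces $x=0$), each one-dimensional form $\langle a \rangle$ is anisotropic over $D_v$. Grouping coefficients into hyperbolic planes, the Witt index of $h$ is therefore at most $\lfloor (n+1)/2 \rfloor$. The $k_v$-rank of $\G_{k_v} = \U(V_{k_v}, h)$ equals this Witt index, while its absolute rank is $n+1$; since $\lfloor (n+1)/2 \rfloor < n+1$ for $n \ge 1$, the group $\G_{k_v}$ is not split.

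The main technical point is the split-case step: verifying that the Morita equivalence carries the standard involution on $D_v$ to the symplectic involution on $M_2(k_v)$, so that $D_v$-hermitian forms really correspond to alternating $k_v$-forms and the unitary group really becomes a symplectic group. This is a standard piece of the theory of algebras with involution, and once it is in place the nonsplit case is a short rank argument. I would therefore expect the write-up to consist of a brief citation for the Morita step followed by the diagonalization/Witt-index calculation.
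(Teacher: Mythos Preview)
Your argument is correct. The paper organizes the two directions differently: for $D_v$ division it simply cites the Tits classification tables \cite[p.~56]{Tits66} to conclude that the relative rank is strictly less than $n+1$, whereas you supply this directly via the Witt-index bound (which is exactly the content behind that table entry). For $D_v$ split, the paper does not argue in place but forward-references Lemma~\ref{lemma:G(R)-split}, which carries out precisely your Morita/idempotent identification---but over an arbitrary PID $R$ with $\O_{D,R}\cong M_2(R)$, so that it also yields $\U(L_R,h)\cong\Sp_{2n+2}(R)$ on integral points. Your split-case step is thus the field-level special case of a lemma the paper needs later anyway; the paper's arrangement avoids repeating the argument, at the cost of a forward reference. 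One small remark: the anisotropy of the reduced norm is not strictly needed for the bound $\mathrm{index}(h)\le\lfloor(n+1)/2\rfloor$, which already follows from nondegeneracy over a division ring; what really uses the division hypothesis is the identification of the $k_v$-rank of $\U(V_{k_v},h)$ with the Witt index of $h$ as a $D_v$-form.
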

\begin{proof}
        It follows from the classification in \cite[p.~56]{Tits66} that
        $\G_{k_v}$ has relative rank less than $n+1$ if $D_v$ is a division
        algebra; thus in this case $\G_{k_v}$ is not split.
        If $D_v$ splits the fact that $\G_{k_v}$ splits will follow from
        Lemma~\ref{lemma:G(R)-split} below with $R = k_v$. 
\end{proof}

\begin{cor}
        \label{cor:G-determines-D}
       The $k$-isomorphism class of $\G$ determines its defining algebra $D$
       uniquely up to $k$-isomorphism.
\end{cor}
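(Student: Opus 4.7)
The plan is to show that the set of places where $D$ ramifies is an invariant of the $k$-isomorphism class of $\G$, and then invoke the classification of quaternion algebras over number fields to conclude.

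First I would observe that the isomorphism class of $\G$ over $k$ determines, for each place $v \in \V$, the isomorphism class of the localization $\G_{k_v}$, and in particular whether $\G_{k_v}$ is split. By Lemma~\ref{lemma:G-splits-D-splits}, this splitting behavior at each $v$ is equivalent to the splitting of the corresponding localization $D_v$ of the defining algebra. Hence the \emph{finite} ramification set $\{v \in \Vf \mid D_v \text{ is a division algebra}\}$ is entirely encoded in the isomorphism class of $\G$.

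Next I would handle the infinite places. Since $\G$ is admissible and $k$ is totally real (as noted in Section~\ref{admissible-groups}), at each real place $v$ the group $\G(k_v)$ is either isomorphic to $\Sp(n,1)$ or is compact; in both cases $\G_{k_v}$ has relative rank strictly less than $n+1$ and is therefore not split. By the same classification argument underlying Lemma~\ref{lemma:G-splits-D-splits} (or directly: a split quaternion algebra over $\R$ would force $\G_{k_v}$ to be a split symplectic group), this forces $D_v \cong \H$ at every $v \in \Vi$. Thus the full ramification set of $D$ in $\V$ is determined by $\G$.

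Finally, I would invoke the classical fact recalled at the beginning of Section~\ref{sec:condition-on-D-h-unique}: a quaternion algebra over a number field $k$ is determined up to $k$-isomorphism by its (necessarily finite, of even cardinality) set of ramified places. Combining this with the previous two steps yields the uniqueness of $D$. The only step requiring any care is the appeal to the classification at the infinite places, but since $k$ is totally real and $\G$ is admissible this is immediate and not really an obstacle.
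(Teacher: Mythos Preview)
Your proof is correct and follows essentially the same approach as the paper: determine the ramification set of $D$ from the splitting behavior of $\G_{k_v}$ via Lemma~\ref{lemma:G-splits-D-splits}, then invoke the classification of quaternion algebras by their ramification sets. The only difference is that you treat the infinite places separately, which is unnecessary since Lemma~\ref{lemma:G-splits-D-splits} is stated for all $v \in \V$; the paper's one-line proof simply uses the lemma uniformly across all places.
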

\begin{proof}
       This is now clear, since $D$ is determined by the set of places where it
       splits.
\end{proof}

\begin{cor}
        \label{cor:D-admissible}
       In order for the hermitian space $(V, h)$ over $D$ to be
       admissible, it is necessary that $D$ ramifies at any $v \in \Vi$, i.e.,
       $D_v \cong \H$ for any $v \in \Vi$.
\end{cor}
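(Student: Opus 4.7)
The plan is to invoke Lemma~\ref{lemma:G-splits-D-splits} place-by-place at each archimedean valuation of $k$. Fix $v \in \Vi$; since $k$ is totally real (as noted in Section~\ref{admissible-groups}), we have $k_v = \R$, and by the admissibility condition \eqref{eq:admissibility-cond} the real Lie group $\G(k_v)$ is isomorphic either to $\Sp(n,1)$ or to one of the simple compact factors of $K$.

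First I would verify that in neither case is $\G_{k_v}$ a split $\R$-group: the split real form of type $\tC_{n+1}$ is $\Sp_{2n+2}(\R)$, whose real rank equals $n+1 \geq 3$, whereas $\Sp(n,1)$ has real rank $1$ and any compact simple real form has real rank $0$. Hence $\G_{k_v}$ fails to split.

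Applying Lemma~\ref{lemma:G-splits-D-splits} in its contrapositive direction at this $v$, it follows that $D_v$ is not split, i.e., $D_v$ is a division algebra over $\R$. Since $\H$ is, up to isomorphism, the unique nonsplit quaternion algebra over $\R$, we conclude $D_v \cong \H$, as required. There is no substantial obstacle here: the corollary is an essentially immediate consequence of Lemma~\ref{lemma:G-splits-D-splits} together with the observation that a unitary group $\U(V,h)$ over $M_2(\R)$ would have relative rank $n+1$, incompatible with both of the admissible possibilities for $\G(k_v)$.
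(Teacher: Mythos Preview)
Your proof is correct and follows essentially the same approach as the paper: use the admissibility condition to identify $\G(k_v)$ as either $\Sp(n,1)$ or a compact factor, observe that neither is split (the paper names the compact factor explicitly as $\Sp(n+1)$ and simply asserts nonsplitness for $n>1$, whereas you justify it via real rank), and then apply Lemma~\ref{lemma:G-splits-D-splits} together with the uniqueness of $\H$ as the nonsplit real quaternion algebra.
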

\begin{proof}
       By the admissibility condition, for any $v \in \Vi$ the group $\G(k_v)$ is either 
       $\Sp(n,1)$ or $\Sp(n+1)=\Sp(n+1,0)$. For $n>1$ these groups are not split. 
       It follows from Lemma \ref{lemma:G-splits-D-splits} that $D$ ramifies at each $v \in \Vi$, so that $D_v \cong \H$
       (see \cite[Sect.~2.5]{MaclReid03}).
\end{proof}

A pair $(k, D)$ with $k \subset \R$ a totally real number field and $D$ a quaternion
algebra over $k$ will be called \emph{admissible} if $D$ satisfies the necessary  
condition of Corollary~\ref{cor:D-admissible}. More simply, we say that ``$D$ is
admissible''.

\begin{prop}
        \label{prop:D-determines-h}
       Let $(V, h)$ and $(V', h')$ be two admissible hermitian spaces of the same dimension
       over the same quaternion $k$-algebra $D$. Then $\U(V, h)$ is $k$-isomorphic to
       $\U(V', h')$. 
\end{prop}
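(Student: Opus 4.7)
The plan is to show that, up to replacing $h'$ by a scalar multiple $ch'$ with $c \in k^*$ (which does not affect the unitary group, since $\U(V', ch') = \U(V', h')$ as algebraic $k$-subgroups of $\GL(V')$), the forms $h$ and $h'$ are isometric over $k$. To this end I would invoke the Hasse principle for hermitian forms over a quaternion algebra (Landherr's theorem): two nondegenerate hermitian forms of the same rank over $D$ are $k$-isometric if and only if they become isometric over every completion $k_v$.

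I would then verify local equivalence place by place. At each archimedean $v$ we have $D_v \cong \H$ by the admissibility of $D$, and hermitian forms over $\H$ are classified by Sylvester signature. Admissibility of both $h$ and $h'$ forces signature $(n,1)$ at the fixed real place $v_0$, and a definite signature -- either $(n+1,0)$ or $(0,n+1)$ -- at every other $v \in \Vi$. Using weak approximation on $k^*$ with respect to the archimedean places, I would choose $c \in k^*$ totally positive at $v_0$, positive at the $v \neq v_0$ where the signatures of $h$ and $h'$ already agree, and negative at the remaining archimedean places; after replacing $h'$ by $ch'$, the signatures match at every $v \in \Vi$.

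At a non-archimedean $v$, the local algebra $D_v$ is determined by $D$. If $D_v$ splits, Morita equivalence identifies hermitian forms over $D_v$ with symplectic forms over $k_v$ of twice the rank, and these are classified by rank alone. If $D_v$ is a division algebra, hermitian forms over $D_v$ with respect to the standard involution are classified by rank and discriminant in $k_v^*/\mathrm{Nrd}(D_v^*)$; since the reduced norm is surjective for a local non-archimedean division algebra, this quotient is trivial, so rank suffices. Hence $h_v$ and $h'_v$ are isometric at every $v \in \Vf$ without any further adjustment.

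Combining the two local analyses, $h$ and $ch'$ have matching invariants at every place of $k$, so the Hasse principle yields a global $k$-isometry $h \cong ch'$, which induces the required $k$-isomorphism $\U(V,h) \cong \U(V', ch') = \U(V', h')$. The main obstacle is the appeal to the Hasse principle itself in this setting -- equivalently the injectivity of $H^1(k, \U(V,h)) \to \prod_v H^1(k_v, \U(V,h))$ for this class of classical groups -- which is a nontrivial input one must cite from the theory of algebraic groups; the local classifications, especially at the ramified finite places, are comparatively standard once the surjectivity of the reduced norm is invoked.
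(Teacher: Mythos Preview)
Your argument is correct and follows essentially the same route as the paper: both reduce to showing that $(V,h)$ and $(V',h')$ are globally isometric (after a harmless scaling), via the classification of hermitian forms over a quaternion algebra by their local invariants. The paper accomplishes this in a single line by citing Scharlau's classification result, whereas you unpack that result by carrying out the local analysis at split finite, ramified finite, and archimedean places separately and then invoking the Hasse principle. Your version is more self-contained, and your weak-approximation step choosing $c \in k^\times$ to align the archimedean signatures is in fact more careful than the paper's proof, which simply asserts that two admissible forms have the same signature at every real place---strictly speaking, admissibility only pins the signature down up to a global sign at each $v \in \Vi$ other than the distinguished one, so some scaling argument (exactly what you supply) is implicitly needed.
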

\begin{proof}
        Being admissible, the two hermitian spaces $(V,h)$ and $(V',h')$ have the same signature over
        $k_v$ for any $v \in \Vi$, and it follows from
        \cite[10.1.8~(iii)]{Schar85} that $(V, h) \cong (V', h')$.
\end{proof}

\begin{rmk}
        \label{rmk:bijection-commens-classes}
        There is actually a bijection between the set of admissible pairs
        $(k, D)$ for $k\subset\R$ totally real, and the set of algebraic groups
        that are admissible for $\Sp(n,1)$; see \cite[Sect.~4]{Meyer}. 
        We will not need this fact in its full generality.
        %and Proposition~\ref{prop:D-determines-h} will be sufficient for our
        %purpose. 
\end{rmk}

\subsection{The classification of lattices} 
\label{sec:BHC-Godement}
Let $\G$ be an admissible $k$-group, and let $\O_k$ denote the
ring of integers in $k$. By the Theorem of Borel and Harish-Chandra, any
subgroup $\Gamma \subset \G(\R)$  that is commensurable with
$\G(\O_k)$ (for some embedding $\G \subset \GL_N$) is a lattice in $\Sp(n,1)$.
Such a subgroup is called \emph{arithmetic}.  
The work of Margulis \cite{Ma}  has shown that superrigidity for $\Sp(n,1)$ 
(later proved by Gromov and Schoen \cite{GS} in the nonarchimedean case, and Corlette \cite{Co}   in 
the archimedean) 
implies the arithmeticity of any lattice in $\Sp(n,1)$, i.e., 
any lattice in $\Sp(n,1)$ can be constructed as an arithmetic subgroup, as
above. A pair $(k, \G)$ with $\G$ admissible determines exactly one
commensurability classes of lattices in $\Sp(n,1)$ (up to conjugacy); see
\cite[Prop.~2.5]{PraRap09}.  We will say that the lattices $\Gamma$ in such a commensurability
classes are \emph{defined over $k$}. Moreover, with Corollary~\ref{cor:G-determines-D} 
we see that the defining $k$-algebra $D$ of $\G$ is an invariant of the
commensurability class. We take over the terminology, and say that that $D$ is
the \emph{defining algebra} of $\Gamma$. Conversely, by Proposition
\ref{prop:D-determines-h}, the pair $(k,D)$ uniquely determines the
commensurability class. 

\begin{prop}[Compactness criterion]
       A lattice $\Gamma \subset \Sp(n,1)$ is nonuniform if and only if it is
       defined over $\Q$.
\end{prop}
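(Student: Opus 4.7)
The plan is to invoke the classical Godement compactness criterion: an arithmetic lattice $\Gamma$ attached to a pair $(k, \G)$ as above is uniform if and only if $\G$ is anisotropic over $k$. Since $\G = \U(V, h)$, this translates into the condition that the hermitian form $h$ has Witt index zero over $k$. The proof then splits into the two implications according to whether or not $k = \Q$.

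For the easy direction, I would assume $k \neq \Q$ and exhibit a place where $h$ is anisotropic. By the admissibility condition \eqref{eq:admissibility-cond}, for every infinite place $v' \in \Vi$ different from the distinguished real embedding the factor $\G(k_{v'})$ is compact, isomorphic to $\Sp(n+1)$. Combined with Corollary~\ref{cor:D-admissible}, which gives $D_{v'} \cong \H$, this forces $h$ to be a positive (or negative) definite hermitian form on $\H^{n+1}$ over $k_{v'}$, hence anisotropic over $k_{v'}$, and therefore anisotropic over $k$. So $\G$ is $k$-anisotropic and $\Gamma$ is uniform.

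The converse requires showing that when $k = \Q$ the form $h$ is necessarily isotropic over $\Q$. Here I would use the Hasse--Minkowski principle for hermitian forms over quaternion algebras (see e.g.\ \cite[Ch.~10]{Schar85}): $h$ is isotropic over $\Q$ as soon as it is isotropic at every completion. At the unique archimedean place, $D_\infty \cong \H$ and the admissibility condition forces the signature of $h$ to be $(n,1)$ with $n \geq 1$, so $h$ is isotropic there. At a finite place $v$ where $D_v$ splits, Morita equivalence turns $h$ into a non-degenerate alternating form on a $k_v$-vector space of even dimension $\geq 2$, which is automatically isotropic; at a finite place where $D_v$ is a division algebra, a standard local result (again in \cite{Schar85}) asserts that every non-degenerate hermitian form of rank $\geq 2$ is isotropic. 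Since the rank $n+1$ is at least $2$, both cases apply, so $h$ is isotropic everywhere locally, hence globally, giving $\Q$-rank $\geq 1$ and thus nonuniformity of $\Gamma$.

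The only delicate step is the local-to-global argument for hermitian forms in Case $k=\Q$; in particular one must be careful that the local isotropy statement at the ramified finite places genuinely only needs rank $\geq 2$ (so that it applies already for $n=1$ were one to consider it), and that the Hasse principle as stated in Scharlau's book applies to hermitian forms over quaternion division algebras over number fields. Once this is quoted correctly, the rest is routine case-checking against the classification from Section~\ref{sec:condition-on-D-h-unique}.
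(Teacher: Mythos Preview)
Your argument is correct. The direction $k\neq\Q\Rightarrow$ uniform is exactly the paper's: both of you observe that a nontrivial archimedean place $v'\neq\id$ gives a compact factor $\G(k_{v'})$, forcing $\G$ to be $k$-anisotropic.

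For $k=\Q\Rightarrow$ nonuniform, however, you take a different route. The paper passes from the hermitian form $h$ to its \emph{trace form} $q_h$, a quadratic form over $\Q$ in $4(n+1)$ variables, using \cite[Theorem~1.1.1]{Schar85} to identify isotropy of $h$ with isotropy of $q_h$; then a single appeal to \cite[Cor.~5.7.3~(iii)]{Schar85} (a Meyer-type consequence of Hasse--Minkowski) finishes, since $4(n+1)\ge 5$ and $q_h$ is indefinite at the real place. Your approach instead stays on the hermitian side and invokes the Hasse principle for hermitian forms from \cite[Ch.~10]{Schar85}, checking local isotropy place by place: indefinite over $\H$ at $\infty$, alternating (hence hyperbolic) via Morita at split finite places, and isotropic in rank $\ge 2$ at ramified finite places by surjectivity of the reduced norm. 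Both arguments ultimately rest on Hasse--Minkowski; the paper's is shorter and avoids the case split, while yours is more self-contained in that it makes the local behaviour explicit and does not require knowing the trace-form transfer. Your caution about the ramified local statement is well placed but unnecessary to dwell on: over a nonarchimedean local field the reduced norm $D_v^\times\to k_v^\times$ is surjective, so any diagonal rank-$2$ hermitian form $a_0\overline{x_0}x_0+a_1\overline{x_1}x_1$ is visibly isotropic.
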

\begin{proof}
  This is specialization of Godement's compactness criterion, which asserts that
  an arithmetic subgroup of $\G$ semisimple is nonuniform in $\G(k\otimes_\Q
  \R)$ if and only if $\G$ is $k$-isotropic. 
  If $k \neq \Q$, an admissible $k$-group $\G$ has a compact factor
  $\G(k_v)$ for some $v \in \Vi$, so that $\G$ cannot be isotropic. Let $k=\Q$,
  and let $\G = \U(V, h)$ admissible defined over $\Q$. Then $\G$ is isotropic
  when $(V, h)$ is, and by \cite[Theorem 10.1.1]{Schar85} this happens exactly
  when its trace form $q_h$ (which is a quadratic form over $\Q$ in $4(n+1)$
  variables) is isotropic. Then  $\G$ is isotropic by \cite[Cor.~5.7.3 (iii)]{Schar85}.
\end{proof}

We will describe in Sect.~\ref{sec:stabilizers-lattices} a concrete way to
construct arithmetic subgroups in $\Sp(n,1)$.
%First we recall in Sect.~\ref{sec:princ-subgps-and-volumes} the notion of
%\emph{principal} arithmetic subgroups, and their volume computation 
%in Sect.~\ref{sec:princ-subgps-and-volumes}. 

\section{Parahoric subgroups and Galois cohomology}
\label{sec:parah-Galois-cohom}

We collect in this section some notions of Bruhat-Tits theory; we refer to \cite{Tits79}.
Sect.~\ref{sec:parah-sbgps} and \ref{sec:parah-gp-scheme} are needed for the
volume computation in Sect.~\ref{sec:princ-subgps-and-volumes}. The content of
Sect.~\ref{sec:Galois-conj-action} will appear later, in Sect.~\ref{lemma:index}, and
the reader might want to skip it until they reach this point.

In this section $\G$ denotes an admissible $k$-group. For any finite place $v
\in \Vf$ we denote by $\o_v$ the valuation ring in $k_v$. 

\subsection{Parahoric subgroups}
\label{sec:parah-sbgps}

%We collect here some notions of Bruhat-Tits theory needed; 
%we refer to \cite{Tits79} and [Prasad]. 
For any $v \in \Vf$ 
the group $\G(k_v)$ acts on its associated Bruhat-Tits building $X_v$. 
A \emph{parahoric subgroup} $P_v \subset \G(k_v)$ is
by definition a stabilizer of a facet of $X_v$ (note that we are working with
$\G$ simply connected). Maximal parahoric subgroups correspond to stabilizers of
vertices on $X_v$. If $\Delta_v$ denotes the affine root system of $\G(k_v)$,
then the conjugacy classes of parahoric subgroups $P_v \subset \G(k_v)$ are in
correspondence with the subsets $\Theta_v \subsetneq \Delta_v$; then $\Theta_v$
is called the \emph{type} of $P_v$. The correspondence preserves the inclusion,
and thus  maximal subgroups have types that omit exactly one element in $\Delta_v$. 

Assume first that  $\G_{k_v}$ is split. Then its affine root
system is given by the following local Dynkin diagram:
\begin{align}
        \includegraphics[width=6cm]{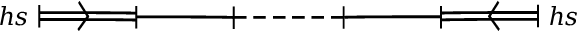}
        \label{eq:local-dynkin-split}
\end{align}
The parahoric subgroups of maximal volume in $\G(k_v)$ are those that
are \emph{hyperspecial}, i.e., of type $\Delta_v \smallsetminus \left\{\alpha 
\right\}$ where $\alpha$ is any of the two hyperspecial vertices (labelled `hs' in
\eqref{eq:local-dynkin-split}). 

There is exactly one non-split form of $\G_{k_v}$ of type $\tC_{n+1}$, and it
splits over the maximal unramified extension $\kun/k_v$. If $\G_{k_v}$
is not split, it corresponds to the local type named $^2\tC_{n+1}$ in 
\cite[Sect.~4.3]{Tits79}; we have reproduced in Table~\ref{tab:local-Dynkin} the corresponding
local indices (their description depends on the parity of $n$).
It is a general fact, proved in \cite[App.~A]{BorPra89}, that parahoric subgroups of maximal volume
in $\G(k_v)$ are those of type $\Delta_v \smallsetminus \left\{ \alpha
\right\}$ where $\alpha$ is \emph{very special} (see \emph{loc.\ cit.}\ for the
definition).
In our case, the very special vertex in $\Delta_v$ is $\alpha_0$
for $n$ even, resp. $\alpha_1$ for $n$ odd (as shown in
Table~\ref{tab:local-Dynkin}; note that $\alpha_1$ is defined only for $n$ odd).
Thus, for $s = n \mod 2$, a parahoric subgroup of $\G(k_v)$ is of maximal 
volume exactly when it is of type 
$\Theta_v = \Delta_v \smallsetminus \left\{ \alpha_s \right\}$. 

\begin{table}
        \centering
        \begin{tabular}{ccc}
                $n=2m$ & & $n=2m-1$
                \\
        \includegraphics[width=5.5cm]{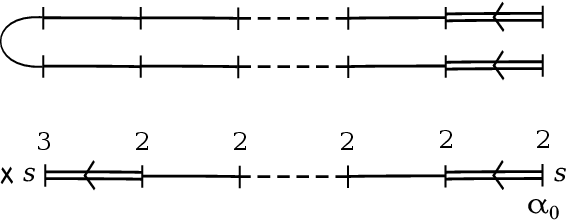} & &
        \includegraphics[width=5.5cm]{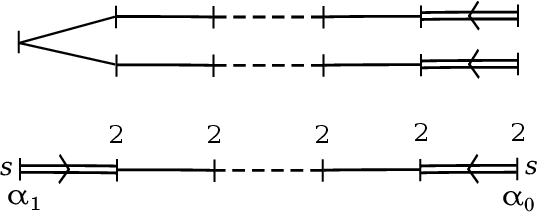}\\[0.5cm] 
        \end{tabular}
        \caption{Local indices for the type $^2\tC_{n+1}$}
        \label{tab:local-Dynkin}
\end{table}

\subsection{The group scheme structure}
\label{sec:parah-gp-scheme}
Let $\G(k_v)$ as above, split or not. Each parahoric subgroup $P_v \subset
\G(k_v)$ can be written as $P_v = \Gs(\o_v)$ for some canonical smooth group
scheme $\Gs$ over $\o_v$. Let $\f_v$ be the residue field of $k_v$.
For a fixed $P_v$, following the notation of \cite{Pra89}, we denote 
by $\Mv$ the maximal reductive quotient of the $\f_v$-group $\Gs_{f_v}$
obtained from $\Gs$ by base change; $\Mv$ is connected. The structure of $\Mv$
can be obtained from the type of $P_v$ by using the procedure explained in
\cite[Sect.~3.5.2]{Tits79}. 
If $\G_{k_v}$ is split, then $P_v$ is hyperspecial if and
only if $\Mv$ is simple of type $\tC_{n+1}$ (i.e., of the same type as
$\G_{k_v}$). In this case we will write $\Mv = \cMv$.

%Let $\G_{k_v}$ be nonsplit. Let $P_v^0$ be of type $\Delta_v \smallsetminus
%\left\{ \alpha_0 \right\}$. Then for any $n>1$ we find 
%that $\Mv$ is isogenic to a product $T \times G$,
%where $T$ is a nonsplit torus of dimension $1$ and $G$ is simple of type
%$^2\tA_n$. In particular, $\dim(\Mv) = (n+1)^2$,  and (using Lang's ???)  
%\begin{align*}
        %|\Mv(\f_v)| &= |T(\f_v)| \cdot |G(\f_v)|. 
%\end{align*}
%Let $n$ odd, with  $n = 2m+1$. Let $P_v^1$ be of
%type $\Delta_v \smallsetminus \left\{ \alpha_1 \right\}$.
%In this case $\Mv$ is nonsplit semisimple of absolute type $\tC_{m} \times \tC_m$ .
%It follows that $|\Mv(\f_v)| = |\Sp_{2m}(\mathfrak{k}_v)|$, where
%$\mathfrak{k}_v$ is the quadratic extension of $\f_v$.

\subsection{The Galois cohomology action}
\label{sec:Galois-conj-action}
For $\G$ admissible over $k$, we let $\CG$ denote its center and $\aG = \G/\CG$ its 
adjoint quotient. For any field extension $K/k$, the group $\aG(K)$ identifies
with the group of inner $K$-automorphisms of $\G$. We denote by $\delta$ the
connecting map in the Galois cohomology exact sequence:
\begin{align}
        \label{eq:delta}
        1 \to \CG(K) \to \G(K) \stackrel{\pi}{\to} \aG(K) \stackrel{\delta}{\to} H^1(K, \CG) \to
        H^1(K, \G).
\end{align}
For $K = k_v$ nonarchimedean this provides an action of $H^1(k_v, \CG)$ on the
local Dynkin diagram $\Delta_v$ (see \cite[Sect.~2.8]{BorPra89}); the action respects the symmetries of
$\Delta_v$, so that $H^1(k_v, \CG) \to \Aut(\Delta_v)$. Note in particular that
for $\G_{k_v}$ nonsplit of type $\tC_{n+1}$, we have $\Aut(\Delta_v) = 1$. In
the split case, there is exactly one nontrivial symmetry.  
We denote by $\xi$ the induced ``global'' map $H^1(k, \CG) \to \prod_{v\in\Vf}
\Aut(\Delta_v)$ (the image actually lies in the direct product).

Of particular interest to us is the subgroup $\delta(\aG(k))' = \delta(\aG(k) \cap \pi(\G(\R)))$ 
(the notation follows \cite[Sect.~2.8]{BorPra89}; recall that we have fixed an
inclusion $k \subset \R$). We will make use of the following alternative
description.
\begin{lemma}
        \label{lemma:A-dGk}
        The group $\delta(\aG(k))'$ coincides with the kernel of the diagonal
        map 
        \begin{align*}
                H^1(k, \CG) \to \prod_{v \in \Vi} H^1(k_v, \CG).
        \end{align*}
\end{lemma}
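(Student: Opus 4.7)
The plan is to compare the long exact Galois cohomology sequence \eqref{eq:delta} globally over $k$ with its analogues over each $k_v$, $v \in \Vi$, via the natural localization maps. Write $\mathrm{loc} : H^1(k, \CG) \to \prod_{v \in \Vi} H^1(k_v, \CG)$ for the product localization and $\delta_v$ for the local connecting map. The fixed embedding $k \subset \R$ corresponds to a distinguished $v_0 \in \Vi$, at which $\G(k_{v_0}) = \G(\R) = \Sp(n,1)$; by admissibility, $\G(k_v) \cong \Sp(n+1)$ is compact for every other $v \in \Vi$. Exactness of \eqref{eq:delta} at the $v_0$-th level gives immediately that $a \in \aG(k)$ lies in $\pi(\G(\R))$ if and only if $\delta_{v_0}(a) = 0$.

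The main local step is to show that at every compact place $v \in \Vi \setminus \{v_0\}$ the map $\pi_v : \G(k_v) \to \aG(k_v)$ is surjective; equivalently $\delta_v \equiv 0$; equivalently the subsequent map $H^1(k_v, \CG) \to H^1(k_v, \G)$ is injective. Since $\CG \cong \mu_2$ and $H^1(\R, \mu_2) = \Z/2$, one has to rule out that the cocycle $\sigma \mapsto -I$ (with $\sigma$ complex conjugation) is a coboundary in $\G(\C) = \Sp_{2(n+1), \C}$ for the compact conjugation: this amounts to the nonexistence of $g \in \G(\C)$ with $\sigma(g) = -g$. A direct computation (already transparent in the $\SL_2$-case $n=0$) shows that such an equation forces $|a|^2 + |c|^2 = -1$ for entries of a column of $g$, which is impossible.

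Combining these two points, for $a \in \aG(k)$ one has $a \in \pi(\G(\R))$ if and only if $\mathrm{loc}(\delta(a)) = 0$, yielding
\[
\delta(\aG(k))' \;=\; \delta(\aG(k)) \cap \ker(\mathrm{loc}).
\]
To conclude, one verifies the inclusion $\ker(\mathrm{loc}) \subset \delta(\aG(k)) = \ker\bigl(H^1(k, \CG) \to H^1(k, \G)\bigr)$ by invoking the Hasse principle for simply connected semisimple groups (Kneser--Harder--Chernousov), which states that $H^1(k, \G)$ injects into $\prod_{v \in \Vi} H^1(k_v, \G)$: indeed, if $\mathrm{loc}(x) = 0$ for $x \in H^1(k, \CG)$, then the image of $x$ in each $H^1(k_v, \G)$ is trivial, hence its image in $H^1(k, \G)$ is too, so $x \in \delta(\aG(k))$.

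The main obstacle is justifying the local vanishing $\delta_v \equiv 0$ at the compact real places; the other ingredients (exactness of Galois cohomology and the Hasse principle) are standard.
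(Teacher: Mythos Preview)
Your proof is correct and follows essentially the same route as the paper: exactness of \eqref{eq:delta} at the distinguished real place $v_0$, surjectivity of $\pi_v$ at the compact archimedean places, and the Hasse principle for simply connected groups to obtain the reverse inclusion. The only difference is that the paper cites Platonov--Rapinchuk for the surjectivity of $\pi_v:\G(k_v)\to\aG(k_v)$ at compact $v$, whereas you give a direct cocycle computation (checking that $\sigma(g)=-g$ is impossible for the compact conjugation); your argument is correct once one remembers that the Galois action on $\G(\C)$ fixing the compact form is $g\mapsto(\bar{g}^{T})^{-1}$, which indeed forces a column norm to equal $-1$.
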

\begin{proof}
        For $x \in H^1(k, \CG)$ we denote by $(x_v)_{v\in\Vi}$ its image in
        $\prod_{v\in\Vi} H^1(k_v, \CG)$.  Let us first assume $x \in \delta(\aG(k))'$.
        Then for the place $v = \id$ corresponding to the inclusion $k \subset \R$ we 
        have  $x_v \in (\delta \circ \pi)(\G(\R))$, which by exactness of
        \eqref{eq:delta} (with $K = \R$) is equivalent to $x_v = 1$.
        For $v \neq \id$, the group $\G(k_v)$ is compact and in this case it is
        known that $\pi: \G(k_v) \to \aG(k_v)$ is surjective (see
        \cite[Sect.~3.2: Cor.~3]{PlaRap94}). Since $x_v \in
        \delta(\aG(k_v))$, we thus have $x_v = 1$ again by the exactness of
        \eqref{eq:delta}. Conversely, suppose that $x$ has trivial image in
        $\prod_{v\in\Vi}H^1(k_v, \CG)$. Then $(x_v)$ has certainly trivial image in 
        $\prod_{v \in \Vi} H^1(k_v, \G)$. But for $\G$ simply connected the
        latter identifies with $H^1(k, \G)$ by the Hasse principle (see
        \cite[Theorem 6.6]{PlaRap94}).     
        By exactness of \eqref{eq:delta} with $K = k$, it follows that $x \in
        \delta(\aG(k))'$.
\end{proof}
\begin{rmk}
        We will see in Lemma \ref{lemma:Sp-surj-PSp} that actually $\G(k_v) \to
        \aG(k_v)$ is surjective for $v = \id$ as well, which slightly simplifies
        the proof. We have preferred giving the above proof, which works quite generally
        when $\G$ is simply connected; it appears in \cite[Sect.~12.2]{EmePhD}. 
\end{rmk}

\section{Principal arithmetic subgroups and volumes}
\label{sec:princ-subgps-and-volumes}

\subsection{Principal arithmetic subgroups}
\label{sec:principal-sbgp}
For $\G$ an admissible  $k$-group, we will denote by $P = (P_v)_{v \in \Vf}$ a collection
of parahoric subgroups $P_v \subset \G(k_v)$ ($v \in \Vf$). Such a collection is called
\emph{coherent} if the product $\prod_{v\in \Vf} P_v$ is open in the group $\G(\Af)$,
where $\Af$ denotes the finite ad\`eles of $k$. This condition implies that $P_v$ is
hyperspecial for all but finitely many $v \in \Vf$. Moreover, one has that the subgroup 
$\Lambda_P = \G(k) \cap \prod_{v \in \Vf} P_v$ is an arithmetic subgroup of $\G(k)$,
called \emph{principal}.

\subsection{The normalized Haar measure}
\label{sec:mu-EP-mu}

The covolume of the principal arithmetic subgroup $\Lambda_P \subset \G(k)$ can be computed with Prasad's
volume formula \cite{Pra89}, in terms of the volumes of the parahoric subgroups $P_v$ ($v\in
\Vf$). In the notation of {\em loc.\ cit.}, our situation corresponds to the case $\G_S =
\G(\R)$ (i.e., $S$ contains a single infinite place corresponding to the inclusion $k
\subset \R$). We write  $\mu = \mu_S$ for  the
normalization of the Haar measure on $\G(\R)$ used in \cite[Sect.~3.6]{Pra89}. Then for the Euler-Poincar\'e
characteristic (in the sense of C.T.C.\ Wall) of $\Gamma \subset \G(\R)$ 
one has $|\chi(\Gamma)| = |\chi(X_u)|\, \mu(\Gamma\bs\G(\R))$,
where $X_u$ is the compact dual symmetric space associated with $\Hy^n_\H$ (see \cite[\S
4]{BorPra89}). Explicitly, $X_u =
\Sp(n+1)/(\Sp(n) \times \Sp(1))$ is the quaternionic projective space $\H P^n$, for which
$\chi(X_u) = n+1$. Moreover, since the symmetric space of
$\Sp(n,1)$ has dimension $4n$, it follows from \cite[Prop.~23]{Serr71} that
$\chi(\Gamma)$ is positive. Thus 

\begin{align}
        \chi(\Gamma) &=  (n+1) \cdot \mu(\Gamma\bs \G(\R)). 
       \label{eq}
\end{align}

%\begin{rmk}
        %\label{rmk:Hirzebruch-prop}
%\end{rmk}

\subsection{Prasad's volume formula}
\label{sec:prasad}
To state the volume formula for $\Lambda_P \subset \G(k)$ in an explicit way we need to introduce
some notation; we mostly follow \cite{Pra89}. The symbol $\D_k$ denotes the absolute value of the
discriminant of $k$, and we write $d = [k:\Q]$ for the degree.  
For each $v \in \Vf$ let $\f_v$ be the residue field of $k_v$, and let  $q_v$ be the
cardinality of $\f_v$. For each parahoric subgroup $P_v$ the reductive
$\f_v$-group $\Mv$ is defined in Sect.~\ref{sec:parah-gp-scheme}. The reductive
$\f_v$-group corresponding to a hyperspecial parahoric subgroup in the split form of $\G$ is
denoted by $\cMv$. For all but finitely many $v \in \Vf$ we have that $P_v$ is
hyperspecial and thus $\Mv \cong \cMv$.  In our situation, 
Prasad's volume formula \cite[Theorem~3.7]{Pra89}  takes the
following form (note that in our case $\ell = k$ since $\G$ is of type $\tC$ and thus has
no outer symmetries):
\begin{align}
        \mu(\Lambda_P\bs \G(\R)) &= \D_k^{\dim \G /2} \left( \prod_{j = 1}^{n+1}
        \frac{(2j-1)!}{(2 \pi)^{2j}}  \right)^d \Eul(P) 
        \label{eq:Prasad-1}
\end{align}
where the ``Euler product'' $\Eul(P)$ is given by
\begin{align}
        \Eul(P) &= \prod_{v\in\Vf} \frac{q_v^{(\dim \Mv + \dim
        \cMv)/2}}{|\Mv(\f_v)|} .  
        \label{eq:Eul-P}
\end{align}

\subsection{The Euler product and zeta functions}
\label{sec:lambda-factors}

Let $T$ be the finite set of places $v \in \Vf$ such that $P_v$ is not
hyperspecial. For $v \not\in T$ we have that $\Mv \cong \cMv$, which is an $\f_v$-split simple
group of type $\tC_{n+1}$, for which $|\cMv(\f_v)| = q_v^{(n+1)^2}
\prod_{j=1}^{n+1}(q_v^{2j} -1)$ (see \cite[Tab.~1]{Ono66}), and $\dim \cMv = \dim \G = (n+1)(2n+3)$.  
By a direct computation we may rewrite $\Eul(P)$ as the following:
\begin{align}
        \Eul(P) &= \prod_{v \in T} e'(P_v) \prod_{v \in \Vf}  \nonumber
        \frac{q_v^{\dim \cMv}}{|\cMv(\f_v)|}\\
        &= \prod_{v \in T} e'(P_v) \prod_{v \in \Vf} \prod_{j=1}^{n+1}
        \frac{1}{1 - q_v^{-2j}} \nonumber \\ 
        &= \prod_{v \in T} e'(P_v) \prod_{j=1}^{n+1} \zeta_k(2j), 
        \label{eq:Eul-ep}
\end{align}
where $\zeta_k$ is the Dedekind zeta function of $k$, and the correcting factors
$e'(P_v)$ (so called ``lambda factors'' in \cite{BelEme}) are given by 
\begin{align}
        e'(P_v) &= q_v^{(\dim \Mv - \dim \cMv)/2} \frac{|\cMv(\f_v)|}{|\Mv(\f_v)|}.
        \label{eq:ePv}
\end{align}
Putting together  Equations \eqref{eq}, \eqref{eq:Prasad-1} and
\eqref{eq:Eul-ep} we can finally write (where the second line is obtained 
from the functional equation of $\zeta_k$; see \cite[Ch.VII~(5.11)]{Neuk99}):
\begin{align}
        \label{eq:Prasad-zeta}
       \chi(\Lambda_P)  &= (n+1) \D_k^{\dim \G /2} \prod_{v \in T} e'(P_v)
        \prod_{j=1}^{n+1} \left(\frac{(2j-1)!}{(2 \pi)^{2j}}\right)^d \zeta_k(2j)\\
        &= (n+1) \prod_{v \in T} e'(P_v) \prod_{j=1}^{n+1} 2^{-d} |\zeta_k(1-2j)|.
        \label{eq:Prasad-final}
\end{align}

\subsection{The nonsplit local factors}
\label{sec:ePv-not-split}

We compute in the following lemma the local factors $e'(P_v)$ of interest to us.

\begin{lemma}
        \label{lemma:vol-eP-max}
        Suppose that $\G_{k_v}$ is nonsplit, and let $P^t_v \subset \G(k_v)$
        be a special parahoric subgroup of type $\Delta_v \smallsetminus \left\{
        \alpha_t \right\}$ (assuming $n$ odd if $t=1$). Then
        \begin{align}
                \label{eq:eP0}
                e'(P^0_v) &= \prod_{j=1}^{n+1} (q_v^j + (-1)^j); \\
                e'(P^1_v) &=  \frac{\prod_{j=1}^{2m}
                (q_v^{2j}-1)}{\prod_{j=1}^m(q_v^{4j} -1)},
                \label{eq:eP1}
        \end{align}
        where $n+1 = 2m$ in the latter.
\end{lemma}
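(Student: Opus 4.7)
My strategy is to identify explicitly the reductive $\f_v$-group $\Mv$ attached to each parahoric $P^t_v$, by applying the Tits procedure of \cite[Sect.~3.5.2]{Tits79} to the local index of $^2\tC_{n+1}$ recorded in Table~\ref{tab:local-Dynkin}. Once $\Mv$ is known, the defining formula \eqref{eq:ePv} reduces $e'(P^t_v)$ to the substitution of $\dim \Mv$ and $|\Mv(\f_v)|$, recalling that $\dim \cMv = (n+1)(2n+3)$ and
$$|\cMv(\f_v)| = q_v^{(n+1)^2}\prod_{j=1}^{n+1}(q_v^{2j}-1).$$

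For $t = 0$, removing the very special vertex $\alpha_0$ leaves a connected type-$A_n$ subdiagram carrying the opposition involution as residual Galois action. The Tits recipe thus identifies $\Mv$ with the quasi-split unitary group of rank $n+1$ over $\f_v$ (split by the unramified quadratic extension $\f_{q_v^2}/\f_v$); it has $\dim \Mv = (n+1)^2$ and
$$|\Mv(\f_v)| = q_v^{n(n+1)/2}\prod_{j=1}^{n+1}\bigl(q_v^j - (-1)^j\bigr).$$
Substituting into \eqref{eq:ePv}, using the factorization $q_v^{2j}-1 = (q_v^j - (-1)^j)(q_v^j + (-1)^j)$ to cancel the ``minus'' factors, and checking that the resulting power of $q_v$ is zero, yields \eqref{eq:eP0}.

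For $t = 1$ (so $n + 1 = 2m$): the Tits recipe applied to the subdiagram obtained after removing $\alpha_1$ yields the Weil restriction $\Mv = \mathrm{Res}_{\f_{q_v^2}/\f_v}\Sp_{2m}$, with $\dim \Mv = 2m(2m+1)$ and
$$|\Mv(\f_v)| = q_v^{2m^2}\prod_{j=1}^{m}(q_v^{4j}-1).$$
Substitution in \eqref{eq:ePv} and simplification of the $q_v$-power directly gives \eqref{eq:eP1}.

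The main obstacle is the correct reading of the Galois action on the residue subdiagram $\Theta_v$ in each case, which is what determines whether $\Mv$ is split, a nontrivial $\f_v$-form, or a Weil restriction; this is where the parity of $n$ and the distinction between very special and ordinary special vertices (cf.\ \cite[App.~A]{BorPra89}) play their role. Once this is settled, the remaining computation is mechanical bookkeeping with orders of classical finite groups of Lie type.
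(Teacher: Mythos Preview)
Your argument is correct and follows the same route as the paper: identify $\Mv$ via the Tits recipe and substitute into \eqref{eq:ePv}. The only cosmetic difference is that for $t=0$ the paper presents $\Mv$ as an almost-direct product $T\cdot H$ of a nonsplit one-dimensional torus with $H$ simple of type $^2\tA_n$ (and appeals to Lang's theorem to get $|\Mv(\f_v)| = (q_v+1)\,|H(\f_v)|$), whereas you name $\Mv$ directly as the quasi-split unitary group in $n+1$ variables; the resulting order $q_v^{n(n+1)/2}\prod_{j=1}^{n+1}(q_v^j-(-1)^j)$ is the same in both descriptions, so the computations coincide. (The paper also remarks that for $n=2$ the local index $^2\tC_3$ is tabulated separately in \cite{Tits79} but yields the same $\Mv$; you may want to note this edge case.)
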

\begin{rmk}
        Note that $e'(P_v^1)$ is clearly an integer. The fact that $P^1_v$ has
        larger volume than $P^0_v$ (see Sect.~\ref{sec:parah-sbgps}) is reflected by 
        the fact that $e'(P^1_v)$ is smaller than $e'(P^0_v)$; this inequality
        can be checked empirically
        (and probably rigorously with some effort) from the formulas in Lemma \ref{lemma:vol-eP-max}.
        One may notice that, as polynomials in $q_v$, these two local factors have quite
        similar order of magnitude though. 
\end{rmk}

\begin{proof}
        The definition of $e'(P^t_v)$ is given in \eqref{eq:ePv}. 
        The dimension and order for $\cMv$ are given in
        Sect.~\ref{sec:lambda-factors}. 
        We obtain the description of $\Mv$ (see Sect.~\ref{sec:parah-gp-scheme}) in each case by
        \cite[Sec.~3.5.2]{Tits79}.  We refer to \cite[Tab.~1]{Ono66} for
        the order of the classical finite simple groups. For $P_v^0$  we have that
        $\Mv$ is given as an almost direct product $\Mv = T \cdot H$, where $T$ is a
        nonsplit one dimensional torus, and $H$ is simple of type $^2\tA_n$.
        In particular, $\dim \Mv = (n+1)^2$, and $|T(\f_v)| =q_v+1$. By Lang's
        isogeny theorem we have $|\Mv(\f_v)| = (q_v + 1) |H(\f_v)|$ and the
        formula for $e'(P_v^0)$ now follows from a straightforward computation. 
        Note that for $n = 2$ the local index $^2\tC_3$ needs to be listed
        separately (see \cite[p.63]{Tits79}); however the description for $\Mv$
        is similar, and the formula remains the same. 
        %\begin{align}
        %|\Mv(\f_v)| &= |T_v(\f_v)| \cdot |G_v(\f_v)| \nonumber \\
        %&= (q_v + 1) q_v^{n(n+1)/2} \prod_{j=1}^n (q_v^{j+1} -
        %(-1)^{j+1})\nonumber \\
        %&= q_v^{n(n+1)/2} \prod_{j=1}^{n+1} (q_v^j - (-1)^j),
        %\label{eq:Mv}

        For $P_v^1$ with $n+1 = 2m$, we have that $\Mv$ is obtained by Weil
        restriction of scalars as $\Mv = \mathrm{Res}_{\mathfrak{k}_v/\f_v}(H)$, where
        $H$ is simple of type $\tC_m$ and $\mathfrak{k}_v/\f_v$ is quadratic
        (i.e., $\mathfrak{k}_v$ has cardinality $q_v^2$) .
        Thus  $\Mv$ has twice the dimension of $H$, which is $m(2m+1)$, and 
        $|\Mv(\f_v)| = |H(\mathfrak{k}_v)|$. The result for $e'(P_v^1)$ follows
        directly.
\end{proof}

\section{Stabilizers of hermitian lattices}
\label{sec:stabilizers-lattices}

In this section we obtain the covolume of the lattices $\Gamma^0_n$ and $\Delta_n$
in $\Sp(n,1)$ as a consequence of Theorem~\ref{sec:volume-stabilizer}. To prove
the latter, we first need to study the structure of the stabilizers of hermitian
lattices; this is done in Sect.~\ref{sec:O-D-R} and
\ref{sec:Lemma-Scharlau-conseq}. In those sections $R$ will denote an 
integral domain containing the ring of integers $\O_k$, and $K$ will be the field 
of fractions of $R$.
%(in practice, $R$ will be the local ring $\O_v$, or a field).

\subsection{Hermitian lattices over orders}
\label{sec:O-D-R}
Let us fix an order $\O_D$ in an admissible quaternion $k$-algebra $D$, 
and consider the right $\O_D$-module $L = \O_D^{n+1}$. We set $\O_{D, R}  = \O_D
\otimes_{\O_k} R$. Then $L_R = L \otimes_{\O_k} R$ is a
right $\O_{D,R}$-module.
Consider a hermitian form $h$ on $L$, described as follows:
\begin{align}
        h(x, y) &= \sum_{i=0}^n a_i \overline{x}_i y_i, 
        \label{eq:hermitian-form-module}
\end{align}
where the coefficients $a_i$ are taken in $\O_k$.  Note that the standard involution on
$D$ restricts to $\O_D$ (use the trace), so that $(L, h)$ is indeed a hermitian module in
the sense of \cite[Ch.~7]{Schar85} (and so is $(L_R, h)$ for any ring extension $R$ of $\O_k$).  
We write $V = L_k = D^{n+1}$, and we will assume that $(V, h)$ is admissible. 
We say in this case that the module $(L, h)$ itself is \emph{admissible}.

Recall that by definition the hermitian module $(L_R, h)$ is \emph{regular} (or
\emph{nonsingular}) if the map $\phi_h: x \mapsto h(x, \cdot)$ induces an isomorphism of
$\O_{D,R}$-modules from $L_R$ onto its dual module $(L_R)^*$, seen as a right
module via $f \alpha = \overline{\alpha} f$ (see \cite[Sect.~7.1]{Schar85}).
When $R$ is a field this
is equivalent to $(L_R, h)$ being nondegenerate, i.e., $(L_R)^\perp =
0$. For more general $R$ we will need the following result.

\begin{lemma}
       If the coefficients of $h$ are invertible in $R$ (i.e., $a_i \in R^\times$ for $i =
       0,\dots, n$) then the hermitian module $(L_R, h)$ is regular.
        \label{lemma:ai-L_R-regular}
\end{lemma}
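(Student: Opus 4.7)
The plan is to exhibit $\phi_h$ explicitly in a free basis and observe that it becomes a diagonal matrix with entries $a_0, \dots, a_n$. Since $L = \O_D^{n+1}$ is free over $\O_D$, the extension $L_R = \O_{D,R}^{n+1}$ is a free right $\O_{D,R}$-module with standard basis $e_0, \dots, e_n$. The dual $(L_R)^*$, equipped with the twisted right action $f\alpha = \overline{\alpha} f$, is again free over $\O_{D,R}$ with basis $e_0^*, \dots, e_n^*$ defined by $e_i^*(e_j) = \delta_{ij}$: the fact that the standard involution is an anti-involution of $\O_{D,R}$ is exactly what is needed for the twisted action to be well defined and for the $e_i^*$ to be right-linearly independent.

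Next I would compute, directly from \eqref{eq:hermitian-form-module}, that
\begin{equation*}
\phi_h(e_i)(e_j) = h(e_i, e_j) = a_i\,\delta_{ij}.
\end{equation*}
Since $a_i \in \O_k$ is fixed by the involution, this gives $\phi_h(e_i) = a_i e_i^* = e_i^* \cdot a_i$, so that in the bases $\{e_i\}$ and $\{e_i^*\}$ the map $\phi_h$ is represented by the diagonal matrix $\mathrm{diag}(a_0, \dots, a_n)$ with entries in the central subring $R \subseteq \O_{D,R}$.

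Finally, the hypothesis $a_i \in R^\times$ implies that this diagonal matrix is invertible over $\O_{D,R}$, with inverse $\mathrm{diag}(a_0^{-1}, \dots, a_n^{-1})$; hence $\phi_h$ is an isomorphism of right $\O_{D,R}$-modules and $(L_R, h)$ is regular. No step really poses an obstacle; the only subtlety is the careful bookkeeping around the twisted right module structure on $(L_R)^*$, which is precisely what ensures that $\{e_i^*\}$ is a basis for the right action under consideration and that the passage from ``$\phi_h(e_i) = a_i e_i^*$'' to an actual matrix representation over the noncommutative ring $\O_{D,R}$ is legitimate.
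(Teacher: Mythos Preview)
Your proof is correct and is essentially identical to the paper's: both compute $\phi_h(\mathbf{e}_i) = a_i \mathbf{e}_i^* = \mathbf{e}_i^* a_i$ on the standard basis (using $a_i = \overline{a_i}$) and then invert by $\mathbf{e}_i^* \mapsto \mathbf{e}_i a_i^{-1}$. The paper is simply terser, omitting your remarks on the matrix representation and the well-definedness of the twisted dual basis.
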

\begin{proof}
        Let $\left\{ \mathbf{e}_i \right\}$ be the standard basis of $L_R = \O_{D,R}^{n+1}$, 
        and let $\left\{ \mathbf{e}_i^* \right\} \subset (L_R)^*$ be the
        associated dual basis. We have $\phi_h(\mathbf{e}_i) =
        a_i \mathbf{e}^*_i = \mathbf{e}^*_i a_i$ (note that $a_i =
        \overline{a}_i$, since $a_i \in \O_k$). The map $\mathbf{e}_i^* \mapsto
        \mathbf{e}_i a_i^{-1}$ from $(L_R)^*$ to $L_R$ is then inverse to $\phi_h$.
        \end{proof}

\subsection{A key lemma}
\label{sec:Lemma-Scharlau-conseq}

For $(L, h)$ and $(V, h)$ as above, 
consider the stabilizer of $L$ in $\G(k) = \U(V, h)$, i.e., the subgroup
\begin{align}
        \U(L, h) &= \left\{ g \in \U(V, h) \,|\, g L = L \right\}.
        \label{eq:SU-L-h}
\end{align}
This is an arithmetic subgroup of $\G(k)$.
More generally, we will denote by $\U(L_R, h) \subset \G(K)$ the stabilizer of $L_R \subset
V_K$. The following lemma is the key result that will be used in Sect.~\ref{sec:Stab-local-global}.

\begin{lemma}
        \label{lemma:G(R)-split}
        Assume that the following conditions hold:
        \begin{enumerate}
                \item $R$ is a principal ideal domain; 
                \item $\O_{D,R}$ splits, i.e., $\O_{D,R} \cong M_2(R)$;
                \item the hermitian module $(L_R, h)$ is regular.
        \end{enumerate}
        Then there is an isomorphism $\phi: \U(V_K, h) \to \Sp_{2n+2}(K)$ such
        that $\phi(\U(L_R, h)) = \Sp_{2n+2}(R)$.
\end{lemma}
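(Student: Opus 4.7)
The plan is to identify $\U(V_K,h)$ with the symplectic group via Morita equivalence for $M_2(R)$, and then to use the hypothesis that $R$ is a PID to diagonalize the resulting alternating form on a free $R$-module.

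First, I fix an isomorphism $\O_{D,R}\cong M_2(R)$ and compute how the standard involution $x\mapsto\overline x$ of $D$ transports. Since the standard involution is the (unique) symplectic involution on any split quaternion algebra, it corresponds to $M\mapsto JM^T J^{-1}$ with $J=\left(\begin{smallmatrix}0&1\\-1&0\end{smallmatrix}\right)$. In particular, for the idempotent $e=E_{11}$, one has $\bar e=E_{22}$ and $\overline{E_{21}}=-E_{21}$; these two facts will drive the rest of the argument.

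Next, I apply Morita equivalence. Setting $e=E_{11}\in M_2(R)$, the functor $N\mapsto N\cdot e$ gives an equivalence between right $M_2(R)$-modules and right $R$-modules. Applied to $L_R=\O_{D,R}^{n+1}\cong M_2(R)^{n+1}$, this produces the $R$-module $M=L_R\cdot e\cong (M_2(R)e)^{n+1}$, which is free of rank $2(n+1)$ since $M_2(R)e$ (the first column) is free of rank $2$ over $R$. For $x,y\in M$ I compute
\begin{align*}
        h(x,y)=h(xe,ye)=\bar e\,h(x,y)\,e=E_{22}\,h(x,y)\,E_{11},
\end{align*}
so $h(x,y)$ lies in $R\cdot E_{21}$ and I may define $\omega(x,y)\in R$ by $h(x,y)=\omega(x,y)E_{21}$. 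Since $h(y,x)=\overline{h(x,y)}$ and $\overline{E_{21}}=-E_{21}$, the form $\omega$ is alternating. Regularity of $(L_R,h)$ translates (via the same Morita equivalence applied to the duality $\phi_h$) to non-degeneracy of $\omega$, because the Morita functor is exact and preserves duals up to the identification $E_{21}\leftrightarrow 1$.

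Finally, since $R$ is a PID and $\omega$ is a non-degenerate alternating form on a free $R$-module of even rank $2n+2$, a standard inductive argument (split off a hyperbolic pair, use that $\omega(x,M)\subset R$ must contain $1$ up to a unit once $x$ is primitive, then take orthogonal complement) yields a symplectic $R$-basis of $M$. Writing $\omega$ in this basis identifies $(M,\omega)\cong(R^{2n+2},\omega_0)$ with the standard form. Applying the base change to $K$ gives $(M_K,\omega)\cong(K^{2n+2},\omega_0)$. The Morita equivalence is functorial in automorphisms and carries $\U(V_K,h)$ onto $\Sp(M_K,\omega)$, while the subgroup stabilizing $L_R$ goes onto the subgroup stabilizing $M$; composing with the chosen symplectic basis produces the desired $\phi$ with $\phi(\U(L_R,h))=\Sp_{2n+2}(R)$.

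The only delicate step is the integral one: ensuring that the Morita functor exactly matches the lattices (i.e., that $\U(L_R,h)$ is mapped onto $\Sp(M,\omega)$ and not just into a finite-index subgroup), and that a symplectic $R$-basis of $M$ exists. Both rest on $R$ being a PID — the first via the fact that the idempotent decomposition $L_R=M\oplus Mf$ (with $f=E_{22}$) is canonical so preserved by unitary automorphisms, the second via the classification of non-degenerate alternating forms over a PID.
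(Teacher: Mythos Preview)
Your argument is correct and is essentially the same as the paper's proof: both fix the idempotent $e=E_{11}$, extract the alternating $R$-bilinear form on $L_R e$ from the $(2,1)$-entry of $h(xe,ye)$, use regularity plus the PID hypothesis to obtain a symplectic $R$-basis, and identify the unitary group with the symplectic group via restriction to $L_R e$. The only difference is packaging---you invoke Morita equivalence and its functoriality where the paper (following Scharlau) writes down explicit extension formulas $\tilde f(x)=f(xe_1)+f(xee_1)e$ (with $e=\left(\begin{smallmatrix}0&1\\1&0\end{smallmatrix}\right)$) for functionals and isometries; these formulas are precisely what make your ``functorial in automorphisms'' step concrete.
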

\begin{proof}
      We adapt the discussion from \cite[pp.~361-362]{Schar85} (which considers
      \emph{skew}-hermitian spaces, only over fields) to our setting. First we
      may fix an identification $\O_{D, R} = M_2(R)$; the standard involution 
      is then given by 
      \begin{align}
              \overline{\matr{a}{b}{c}{d}} &= \matr{d}{-b}{-c}{a}.
              \label{eq:stand-invol-M2}
      \end{align}
      Let $e_1 = \matr{1}{0}{0}{0}$  and $e_2 = \matr{0}{0}{0}{1}$, so
      that $V_K$ has the following splitting: $V_K = V_K e_1 \oplus V_K e_2$. We
      set $V_1 =  V_K e_1$. As for {\em loc.\ cit.} we obtain from $h$ a bilinear
      form $b_h$ on $V_1$ determined by 
      \begin{align}
              h(x e_1, y e_1) = \matr{0}{0}{b_h(x e_1, y e_1)}{0},
              \label{eq:b_h-from-h}
      \end{align}
      and in our case $b_h$ is easily seen to be antisymmetric. Since $(L_R, h)$
      is a hermitian module, the form $b_h$ actually restricts to a bilinear 
      (antisymmetric) form on the  $R$-lattice $L_1 = L_R e_1$ of $V_1$.  Note
      that $L_1$ is free over $R$ of rank $2n+2$. If $f \in L_1^*$ then we can 
      extend $f$ to $L_R$ by setting for any $x \in L_R$: 
      \begin{align}
              \tilde{f}(x) = f(x e_1) + f(x e e_1) e,
              \label{eq:f-tilde}
      \end{align}
      where $e = \matr{0}{1}{1}{0}$. One computes that this is indeed an
      extension, which is actually $\O_{D,R}$-linear, i.e., $\tilde{f} \in (L_R)^*$.
      In particular, we have that
      the symplectic module $(L_1, b_h)$ is regular, as $(L_R, h)$ itself is
      assumed to be regular. Since by assumption $R$ is a PID, we can now deduce
      that $(L_1, h)$ is a orthogonal sum of hyperbolic modules (see for
      instance \cite[Prop.~2.1]{KirkMcDo81}), and thus its isometry group is
      isomorphic to $\Sp_{2n+2}(R)$.

      An analogous formula to \eqref{eq:f-tilde} can be used to extend any isometry $\sigma$
      of $(L_1, h)$ to an isometry $\tilde{\sigma} \in \U(L_R, h)$ (see
      \cite[p.362]{Schar85}). This shows
      that $g \mapsto g|_{L_1}$ yields an isomorphism from $\U(L_R, h)$ to
      $\Sp_{2n+2}(R)$. The same construction with $R = K$ thus provides the
      isomorphism $\phi$ in the statement.
\end{proof}

%As a corollary of the proof we obtain the following result, which was already
%mentioned in Section \ref{sec:arithm-sbgp}. As a second corollary we obtain a
%concrete description of the stabilizer groups $\U(L, h)$.

%\begin{cor}
        %\label{cor:admi-norm-1}
        %For $(V, h)$ admissible, any $g \in \U(V, h)$ has reduced norm $1$.
%\end{cor}
%\begin{proof}
       %Let $K$ such that $D_K$ is split. The reduced norm of $g \in \U(V, h)$
       %corresponds to the determinant of $g$ seen as an element of
       %$\End_K(V_K)$. With the splitting tabs$V_K = V_1 \oplus V_2$ (with now $V_2 =
       %V_K e_2$), we obtain $\det(g) = \det(g|_{V_1})\det(g|_{V_2})$. Both $g|_{V_i}$
       %for $i=1, 2$ are symplectic isometries, and those are known to have
       %determinant $1$. 
%\end{proof}

%We note the following further consequence, for which we could not find a
%reference. 
%\begin{cor}
        %\label{cor:stab-coeff-matr}
        %When representing $\G(k) = \U(V, h)$ by matrices with coefficients in
        %$D$, the subgroup $\U(L, h)$ corresponds to the set of matrices with
        %coefficients in $\O_D$. 
%\end{cor}
%\begin{proof}
        %Consider $\ell/k$ such that $D_\ell$ splits
%\end{proof}

\subsection{The local structure of lattice stabilizers}
\label{sec:Stab-local-global}
Let again $(L, h)$ denote an admissible lattice over $\O_D$,
with $D$ defined over the number field $k$ and $\O_D \subset D$ an order.
The following (non-standard) terminology will be convenient for us.
\begin{definition}
        \label{def:max-type}
        We say that $(L, h)$ is \emph{of maximal type} 
        if $\O_D$ is maximal and $(L, h)$ is regular. 
\end{definition}
\begin{rmk}
        \label{rmk:exist-max-type}
       Given $D$, the existence of an admissible $(L, h)$ of maximal type does not seem to be
       obvious (and we believe that it is wrong in general).
\end{rmk}

For each finite place $v \in \Vf$ we 
%denote by $\o_v$ the valuation ring in the completion $k_v$, and for $R = \o_v$ we
shall abbreviate the notation from Sect.~\ref{sec:O-D-R} (with $R = \o_v$) as follows:
$L_v = L_{\o_v}$. As above, $\Ram$ denotes the set of finite places 
$v \in \Vf$ where $D_v$ ramifies, and $\G = \U(V, h)$.

\begin{lemma}
        \label{lemma:stab-hypersp}
        Assume that $(L, h)$ is of maximal type, and let $v \in \Vf$ be a finite
        place with $v \notin \Ram$. Then  $\U(L_v, h)$ is a hyperspecial
        parahoric subgroup in $\G(k_v) \cong \Sp_{2n+2}(k_v)$. 
\end{lemma}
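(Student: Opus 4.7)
The plan is to reduce the statement to Lemma~\ref{lemma:G(R)-split} applied with $R = \o_v$, and then identify $\Sp_{2n+2}(\o_v)$ as a hyperspecial parahoric subgroup of $\Sp_{2n+2}(k_v)$.

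First I would check the three hypotheses of Lemma~\ref{lemma:G(R)-split}. The ring $R = \o_v$ is a discrete valuation ring, hence a PID, so (i) is immediate. For (ii), since $\O_D$ is a maximal order in $D$, its localization $\O_{D,\o_v} = \O_D \otimes_{\O_k} \o_v$ is a maximal $\o_v$-order in $D_v = D \otimes_k k_v$. Because $v \notin \Ram$, we have $D_v \cong M_2(k_v)$, and it is a classical fact (see e.g. \cite[Ch.~II]{Vign80}) that in a split quaternion algebra over a local nonarchimedean field all maximal orders are conjugate to $M_2(\o_v)$; thus $\O_{D,\o_v} \cong M_2(\o_v)$. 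For (iii), the regularity of $(L,h)$ over $\O_D$ is preserved under the flat base change $\O_k \to \o_v$: the map $\phi_h: L \to L^*$ is an isomorphism of $\O_D$-modules, so its localization $\phi_h \otimes \mathrm{id}: L_v \to (L_v)^*$ remains an isomorphism, showing that $(L_v, h)$ is regular.

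Lemma~\ref{lemma:G(R)-split} then yields an isomorphism $\phi: \G(k_v) = \U(V_{k_v}, h) \to \Sp_{2n+2}(k_v)$ identifying $\U(L_v, h)$ with $\Sp_{2n+2}(\o_v)$. It remains to argue that $\Sp_{2n+2}(\o_v)$ is a hyperspecial parahoric subgroup of $\Sp_{2n+2}(k_v)$. This is standard in Bruhat-Tits theory: the Chevalley $\Z$-group scheme $\Sp_{2n+2}$ is smooth and reductive over $\o_v$, its generic fiber is the split simply connected group $\Sp_{2n+2, k_v}$, and by definition (see \cite[Sect.~3.8]{Tits79}) the $\o_v$-points of a smooth connected reductive $\o_v$-model of the split group form a hyperspecial parahoric subgroup. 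Under $\phi$ this hyperspecial subgroup corresponds to $\U(L_v, h)$, which finishes the proof.

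The only slightly delicate step is verifying condition (ii) of Lemma~\ref{lemma:G(R)-split}, since ``maximal'' in a local split quaternion algebra could \emph{a priori} mean any of the conjugates of $M_2(\o_v)$; but since the conclusion we want is invariant under conjugation inside $D_v^\times$ (which only changes $\phi$ by an inner automorphism of $\Sp_{2n+2}(k_v)$, preserving hyperspeciality), this causes no trouble. I do not expect any genuine obstacle in this argument, as all three inputs to Lemma~\ref{lemma:G(R)-split} are essentially forced by the hypothesis that $(L,h)$ is of maximal type together with $v \notin \Ram$.
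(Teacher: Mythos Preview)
Your proof is correct and follows essentially the same approach as the paper: both verify the hypotheses of Lemma~\ref{lemma:G(R)-split} with $R=\o_v$ (the key point being that a maximal order in $D_v\cong M_2(k_v)$ is conjugate to $M_2(\o_v)$), and then invoke the standard fact that $\Sp_{2n+2}(\o_v)$ is hyperspecial parahoric. Your version is simply more explicit about checking conditions (i) and (iii) and about why the conjugacy in (ii) suffices, whereas the paper leaves these as understood.
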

\begin{proof}
        The order $\O_{D, \o_v}$, being maximal in $D_v \cong M_2(k_v)$, 
        must be conjugate to $M_2(\o_v)$ (see \cite[Ch.~6]{MaclReid03}). Thus we can apply
        Lemma~\ref{lemma:G(R)-split}: it implies that  $\U(L_v, h)$ identifies with   
        $\Sp_{2n+2}(\o_v)$, which is hyperspecial parahoric by \cite[Sect.~3.4.2]{Tits79}.
\end{proof}

We now turn our attention to the  case of places where $D$ ramifies.

\begin{lemma}
       \label{lemma:stab-spec}
       For $(L, h)$ of maximal type and $v \in \Ram$, the subgroup 
       $\U(L_v, h)$ is a special parahoric subgroup in $\G(k_v)$ of type
       $\Delta_v \smallsetminus \left\{ \alpha_0 \right\}$.
\end{lemma}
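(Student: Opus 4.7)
The plan is to reduce the problem to a standard hermitian lattice over the maximal order of $D_v$, then identify the resulting stabilizer by computing its reductive quotient and matching it with the local index of type ${}^2\tC_{n+1}$ from Sect.~\ref{sec:parah-sbgps}.

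\emph{Step 1: Normalize the form over $\o_v$.} Since $v \in \Ram$, the algebra $D_v$ is a division algebra and $\O_{D,v} := \O_D \otimes_{\O_k} \o_v$ is its unique maximal order. Let $\pi_D$ be a uniformizer of $\O_{D,v}$; then $\pi_D^2 = u\pi_v$ for some unit $u$, and $\O_{D,v}^\times \cap k_v = \o_v^\times$. Regularity of $(L_v,h)$ forces $\phi_h(\mathbf{e}_i) = \mathbf{e}_i^* a_i$ to be part of an $\O_{D,v}$-basis of $(L_v)^*$, hence $a_i \in \o_v^\times$ for every $i$. Because the reduced norm $N\colon \O_{D,v}^\times \to \o_v^\times$ is surjective for a quaternion division algebra over a local field, the change of basis $\mathbf{e}_i \mapsto \mathbf{e}_i u_i$ replaces $a_i$ by $N(u_i)a_i$, so after rescaling we may assume $a_i = 1$ for all $i$, i.e., $h$ is the standard hermitian form on $L_v = \O_{D,v}^{n+1}$.

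\emph{Step 2: The stabilizer is parahoric.} The group $\U(L_v,h)$ preserves the lattice $L_v$ and is therefore bounded in $\G(k_v)$. As $\G(k_v)$ is simply connected, any bounded subgroup lies in a unique parahoric subgroup. Moreover, the functor of automorphisms of $(L_v, h)$ is representable by a smooth affine $\o_v$-group scheme $\Gs$ with generic fiber $\G_{k_v}$, which by the Bruhat--Tits theory of classical groups (as developed in \cite{Tits79}, Sect.~2) coincides with the Bruhat--Tits scheme of a parahoric $P_v$ with $P_v = \Gs(\o_v) = \U(L_v,h)$.

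\emph{Step 3: Compute the reductive quotient.} The residue algebra $\mathfrak{k}_v := \O_{D,v}/\pi_D\O_{D,v}$ is a quadratic extension of $\f_v$, and the standard involution of $\O_{D,v}$ descends to the nontrivial Galois automorphism of $\mathfrak{k}_v/\f_v$. Reducing $L_v$ modulo $\pi_D$ yields an $(n+1)$-dimensional $\mathfrak{k}_v$-vector space equipped with the nondegenerate hermitian form $\bar{h} = \sum \bar{x}_i y_i$. Hence the reduction map gives a homomorphism from the special fiber $\Gs_{\f_v}$ to the $\f_v$-group $\U_{n+1}(\mathfrak{k}_v/\f_v)$, and its kernel is the unipotent radical (given by congruences modulo $\pi_D$ on the off-diagonal entries together with imaginary part on the diagonal). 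The reduced norm restricted to $\o_v^\times$ provides an extra $1$-dimensional nonsplit torus $T$, and altogether the maximal reductive quotient is $\Mv = T \cdot H$ with $H$ simple of type ${}^2\tA_n$.

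\emph{Step 4: Match with the local index.} This reductive quotient is exactly the one attached to the vertex $\alpha_0$ in the local Dynkin diagram of type ${}^2\tC_{n+1}$ displayed in Table~\ref{tab:local-Dynkin}, and it coincides with the description of $\Mv$ used in the computation of $e'(P_v^0)$ in Lemma~\ref{lemma:vol-eP-max}. Since the conjugacy class of a parahoric subgroup is determined by its type, we conclude that $\U(L_v, h) = P_v^0$ is the special parahoric of type $\Delta_v \smallsetminus \{\alpha_0\}$, as claimed.

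The main obstacle is Step 3: one must produce a genuinely smooth $\o_v$-scheme structure on the stabilizer and correctly identify the reductive quotient of its special fiber, rather than merely a set-theoretic reduction. I would handle this either by invoking Bruhat--Tits' construction for classical unitary groups directly, or by exhibiting the scheme as the naive unitary group scheme over $\O_{D,v}$ and verifying smoothness via a Jacobian criterion applied to the standard form.
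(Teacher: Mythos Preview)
Your approach is sound but genuinely different from the paper's. The paper argues by Galois descent: it passes to the maximal unramified extension $\kun$, over which $D$ splits and Lemma~\ref{lemma:G(R)-split} applies to show that $\U(\Lun,h)$ is hyperspecial in $\G(\kun)$; the corresponding hyperspecial point $x\in\Xun$ is Galois-fixed, hence lies in $X_v$, and $\U(L_v,h)=\Pun\cap\G(k_v)$ is its stabilizer. The type is then read off from the fact that $\alpha_0$ is the unique vertex of $\Delta_v$ arising by restriction from a hyperspecial vertex over $\kun$. This avoids any direct analysis of the integral model over $\o_v$ and reuses Lemma~\ref{lemma:G(R)-split} as the sole structural input.

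Your route---normalizing $h$, exhibiting the naive unitary $\o_v$-scheme, and identifying its reductive quotient---is the concrete alternative and is how Bruhat--Tits theory for classical groups is often carried out explicitly. A few points to tighten: the claim in Step~2 that a bounded subgroup lies in a \emph{unique} parahoric is false as stated (an Iwahori sits in several maximal ones), though you do not actually use it; in Step~3 the torus $T$ is simply the center of $\U_{n+1}(\mathfrak{k}_v/\f_v)$ (norm-one scalars), not something produced by the reduced norm on $\o_v^\times$; and in Step~4 you should remark that this $\Mv$ is not isomorphic to the reductive quotient attached to any other vertex of the ${}^2\tC_{n+1}$ diagram. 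The smoothness you flag is real but easy here: the Lie algebra of the naive scheme is the free $\o_v$-module of skew-hermitian matrices over $\O_{D,\o_v}$, of rank $(n+1)(2n+3)=\dim\G$, and connectedness of the special fiber follows since $\U_{n+1}$ and the congruence kernel are both connected. The paper's descent argument buys all of this for free at the cost of invoking the building over $\kun$.
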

\begin{proof}
        %First we claim that $\hat P_v\cap \G(k_v)=\U(\Lun, h)\cap \G(k_v)=\U(L_v,h)$.
        %As before $$h(x,y)=\sum a_i \bar x_i y_i,$$ where $x_i, y_i\in \O_D\otimes \hat \o_v, a_i\in \o_k$. If we set $x_i=X_i\otimes \o_i$ where $X_i\in \O_D, \o_i \in \hat \o_v$ and
        %$y_i=Y_i\otimes \o_i'$ correspondingly,
        %then  $\bar x_i= \bar X_i \otimes \o_i$, and $\bar x_i y_i=\bar X_i Y_i\otimes \o_i\o_i'$.
        %Hence   $$\U(\Lun,h)=\U(L,h) \otimes_{\o_k} \hat\o_v=\U(L_v,h)\otimes_{\o_v}\hat\o_v \subset \hat P_v.$$
        %This implies that the restriction of $\hat P_v$ to $\G(k_v)=\U(V_{k_v},h)$ is $\U(L_v,h)$ since $\O_{D, \o_v}$ is maximal in $D_v$, which is equal to the restriction of $\U(\Lun,h)$ to $\G(k_v)$.
        Let $\kun$ be the maximal unramified extension of $k_v$, with ring of integers
        $\oun$.  
        Let  $\O_{D,v} = \O_{D, \o_v}$ and $\widehat{\O} = \O_{D,v} \otimes_{\o_v} \oun$. 
        The latter in an order in $\widehat{D}_v = D_v \otimes \kun$, and we consider a maximal 
        order $\O' \subset \widehat{D}_v$ containing $\widehat{\O}$. That is,
        \begin{align}
                \label{eq:Ov}
                \O_{D,v} \subset \widehat{\O} \subset \O'\,.
        \end{align}
        Thus $\O' \cap D_v$ is an order in $D_v$, which equals $\O_{D,v}$ since the latter is maximal. 
        Note that $\widehat{D}_v$ is split (see \cite[Theorem 2.6.5]{MaclReid03}).

        From the inclusions~\eqref{eq:Ov} we may interpret the subgroup $P_v = \U(L_v, h) \cong \U(\O_{D,v}^{n+1}, h)$ 
        of $\G(k_v)$ as a subgroup of the matrix group $P_v' = \U((\O')^{n+1}, h)$. The latter is a
        hyperspecial parahoric of $\G(\kun)$ by Lemma~\ref{lemma:stab-hypersp}. From the equality
        $\O' \cap D_v = \O_{D,v}$ we deduce $P_v' \cap \G(k_v) = P_v$. But in view of 
        the local indices in Table~\ref{tab:local-Dynkin} this means that $P_v$ is a special parahoric
        subgroup of type $\Delta_v \smallsetminus \left\{ \alpha_0 \right\}$
        (since $\alpha_0$ is the
        unique affine root in $\Delta_v$ that appears as the restriction of 
        hyperspecial roots of $\G(\kun)$).
        %Using
        %Lemma~\ref{lemma:G(R)-split},
        %we see that $\Pun$ is a hyperspecial parahoric subgroup of
        %$\G(\kun)$.  
       %Since $\U(L_v, h) = \Pun \cap \G(k_v)$, we see that
        %$\U(L_v, h)$ is parahoric, of type $\Delta_v
        %\smallsetminus \left\{ \alpha_0 \right\}$ (since $\alpha_0$ is the
        %unique affine root in $\Delta_v$ that appears as the restriction of 
        %hyperspecial roots of $\G(\kun)$).
\end{proof}

\subsection{The volume formula for the maximal type}
\label{sec:volume-stabilizer}
Lattices of maximal type are particularly interesting because of the
following result. Recall that $q_v$ denotes the cardinality of the residue field
of $k_v$ (for $v \in \Vf$). See Definition~\ref{def:max-type} for ``maximal type''. 
\begin{theorem}
        Let $(L, h)$ be an admissible hermitian $\O_D$-lattice of maximal type. Then 
        $\U(L, h)$ is a principal arithmetic subgroup of $\G(k) = \U(V, h)$; and  
        \begin{align}
                \chi(\U(L, h)) &= (n+1) \prod_{j=1}^{n+1} \left(
                \frac{\zeta_k(1-2j)}{2^{\scriptscriptstyle [k:\Q]}} \prod_{v \in \Ram} q_v^j + (-1)^j \right),
                \label{eq:covolume-U-L-h}
        \end{align}
        where $\Ram$ is the set of finite places where $D$ ramifies. 
        \label{thm:covolume-U-L-h}
\end{theorem}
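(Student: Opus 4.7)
The plan is straightforward: realize $\U(L, h)$ as a principal arithmetic subgroup $\Lambda_P$ and feed the local data into Prasad's volume formula \eqref{eq:Prasad-final}. Setting $P_v := \U(L_v, h)$ for every $v \in \Vf$, Lemma~\ref{lemma:stab-hypersp} guarantees that $P_v$ is hyperspecial for every $v \notin \Ram$, so the collection $(P_v)_{v}$ is automatically coherent (the set $\Ram$ being finite). A standard reconstruction of the global lattice from its local completions, $L = V \cap \prod_v L_v$ inside $V \otimes_k \Af$, then yields
\begin{align*}
\U(L, h) \;=\; \G(k)\cap\prod_{v\in\Vf}P_v \;=\; \Lambda_P,
\end{align*}
exhibiting $\U(L, h)$ as principal arithmetic in the sense of Sect.~\ref{sec:principal-sbgp}.

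With this identification I would apply formula \eqref{eq:Prasad-final}. The set $T$ of non-hyperspecial places appearing there coincides with $\Ram$: the inclusion $T \subseteq \Ram$ is Lemma~\ref{lemma:stab-hypersp}, while the reverse inclusion follows from Lemma~\ref{lemma:stab-spec} (combined with Lemma~\ref{lemma:G-splits-D-splits}, which shows that $\G_{k_v}$ is nonsplit for $v \in \Ram$, so that a special parahoric there is certainly not hyperspecial). For each $v \in \Ram$, Lemma~\ref{lemma:stab-spec} identifies $P_v$ as the special parahoric of type $\Delta_v \smallsetminus \{\alpha_0\}$, and Lemma~\ref{lemma:vol-eP-max}, equation~\eqref{eq:eP0}, computes the corresponding lambda factor:
\begin{align*}
e'(P_v) \;=\; \prod_{j=1}^{n+1}\bigl(q_v^{j} + (-1)^{j}\bigr).
\end{align*}
Substituting this into \eqref{eq:Prasad-final} and interchanging the two (finite) products over $j$ and over $v \in \Ram$ gives exactly the formula stated in the theorem. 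The dropping of the absolute value on $\zeta_k(1-2j)$ is justified by the positivity of $\chi$ recorded in Sect.~\ref{sec:mu-EP-mu}: the two sides must have the same sign, which forces them to be equal.

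The main obstacle at this stage is almost entirely cosmetic, because the real work was done beforehand: the geometric content of the theorem lives in the local analysis of Lemmas~\ref{lemma:stab-hypersp} and~\ref{lemma:stab-spec}, which rested on the key Lemma~\ref{lemma:G(R)-split} allowing one to trivialize the hermitian module over the split quaternion orders $\O_{D,\o_v}$ and $\O_{D,\oun}$. What remains here is only the bookkeeping of assembling Prasad's formula, checking the identification $T = \Ram$, and the harmless rearrangement of products; the only conceptual point worth double-checking is the local-global identity $\U(L,h) = \Lambda_P$, which reduces to the statement that a $k$-linear automorphism of $V$ preserves $L$ iff it preserves every completion $L_v$.
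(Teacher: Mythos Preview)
Your argument is correct and follows the paper's own proof essentially line for line: write $L = V_k \cap \prod_v L_v$, deduce $\U(L,h) = \G(k) \cap \prod_v \U(L_v,h) = \Lambda_P$, invoke Lemmas~\ref{lemma:stab-hypersp} and~\ref{lemma:stab-spec} to identify the local types, and then plug Lemma~\ref{lemma:vol-eP-max} into \eqref{eq:Prasad-final}. The additional remarks you make (coherence of $(P_v)$, the equality $T=\Ram$, the sign of $\zeta_k(1-2j)$) are all fine and merely spell out details the paper leaves implicit.
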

\begin{proof}
        We can write $L = V_k \cap \prod_{v \in \Vf} L_v$, from which
        we obtain (for $\G(k)$ diagonally embedded in $\prod_v \G(k_v)$): 
        \begin{align*}
                 \U(L, h) &= \G(k) \cap \prod_{v \in \Vf} \U(L_v, h).
                %\label{eq:SU-Mh-local-global}
        \end{align*}
        With Lemmas \ref{lemma:stab-hypersp} and \ref{lemma:stab-spec} this shows
        that $\U(L, h)$ is principal, and the formula for $\chi(\U(L, h))$ is
        deduced from \eqref{eq:Prasad-final} and Lemma \ref{lemma:vol-eP-max}. 
\end{proof}

We emphasize the special case of the standard hermitian form over the
Hurwitz integers in the next corollary. It implies the formula in
\eqref{eq:vol-Gamma-Hu}, since we have 
$\chi(\Gamma^0_n) = \chi(\Sp(n,1, \Hu)) / 2$ by construction.

\begin{cor}
        \label{cor:vol-Sp-Hur}
       The arithmetic subgroup $\Sp(n,1, \Hu)$ is principal, and 
       \begin{align}
               \chi(\Sp(n,1, \Hu)) &= (n+1) \prod_{j=1}^{n+1} \frac{2^j + (-1)^j}{4j}
               |B_{2j}|,
               \label{eq:vol-SpHu}
       \end{align}
       where $B_m$ is the $m$-th Bernoulli number.
\end{cor}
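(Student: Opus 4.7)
The plan is to recognize $(\Hu^{n+1}, h)$ (with $h$ the form \eqref{eq:hermit-n-1}) as an admissible hermitian lattice of maximal type in the sense of Definition~\ref{def:max-type}, and then directly invoke Theorem~\ref{thm:covolume-U-L-h}.

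First I would identify the defining data. The Hamilton quaternion algebra $\H$ is the $\Q$-algebra $\left(\frac{-1,-1}{\Q}\right)$; it is well known that $\H$ ramifies exactly at the infinite place and at the prime $p=2$ (see e.g.\ \cite[Sect.~7.3]{MaclReid03}). Therefore we take $k=\Q$, $D=\H$, $\O_D = \Hu$, and with this choice $\Ram = \{2\}$ (with residue cardinality $q_v = 2$). Admissibility of $(V,h) = (\H^{n+1},h)$ is immediate from the fact that $\H\otimes_\Q\R \cong \H$ and the signature $(n,1)$ of $h$ realizes $\U(V_\R,h) \cong \Sp(n,1)$.

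Next I would verify the two conditions in Definition~\ref{def:max-type}. The Hurwitz order $\Hu$ is a maximal order in $\H$ (see \cite[Sect.~8.2]{Conway-Sloane}), so the first condition holds. The coefficients of $h$ are $a_0 = -1$ and $a_1 = \dots = a_n = 1$, all of which lie in $\Z^\times$; hence Lemma~\ref{lemma:ai-L_R-regular} guarantees regularity of $(L,h)$, establishing the second condition. Therefore $(L,h)$ is of maximal type.

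Now I apply Theorem~\ref{thm:covolume-U-L-h}. With $[k:\Q]=1$, $\Ram=\{2\}$ and $q_v=2$, formula~\eqref{eq:covolume-U-L-h} specializes to
\begin{align*}
        \chi(\U(L,h)) = (n+1)\prod_{j=1}^{n+1} \frac{|\zeta_\Q(1-2j)|}{2}\bigl(2^j + (-1)^j\bigr).
\end{align*}
Using the classical identity $\zeta_\Q(1-2j) = -B_{2j}/(2j)$, this becomes
\begin{align*}
        \chi(\U(L,h)) = (n+1)\prod_{j=1}^{n+1} \frac{2^j+(-1)^j}{4j}\,|B_{2j}|,
\end{align*}
which is exactly~\eqref{eq:vol-SpHu}. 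Finally, $\U(L,h) = \Sp(n,1,\Hu)$ by definition, and principality follows because Theorem~\ref{thm:covolume-U-L-h} asserts that the stabilizer of a maximal-type lattice is a principal arithmetic subgroup. There is no substantive obstacle; the only points to check carefully are the ramification set of $\H$ and the specialization $\zeta_\Q(1-2j) = -B_{2j}/(2j)$, both of which are standard.
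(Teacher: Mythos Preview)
Your proof is correct and follows essentially the same route as the paper: verify that $\Hu$ is a maximal order in $\left(\frac{-1,-1}{\Q}\right)$ ramified exactly at $2$ and $\infty$, use Lemma~\ref{lemma:ai-L_R-regular} to check regularity so that $(\Hu^{n+1},h)$ is of maximal type, and then specialize Theorem~\ref{thm:covolume-U-L-h} via $\zeta(1-2j)=-B_{2j}/(2j)$.
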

\begin{proof}
        We have that $\Hu$ is a maximal order in $D = \Hu \otimes \Q = \left(
        \frac{-1, -1}{\Q} \right)$, and
       the latter is the quaternion $\Q$-algebra that ramifies exactly at $p = 2$ 
       and $p = \infty$ (see \cite[p.79]{Vign80}). By Lemma~\ref{lemma:ai-L_R-regular} it is clear that 
       $(\Hu^{n+1}, h)$, with $h$ given in \eqref{eq:hermit-n-1}, is of maximal type.
       Thus we can apply the theorem, and the formula in \eqref{eq:vol-SpHu} follows immediately 
       from the known expression:
       \begin{align*}
               \zeta(-m) &= (-1)^m \frac{B_{m+1}}{m+1}.
       \end{align*}
\end{proof}

\subsection{The covolume of $\Delta_n$}
\label{sec:L-h-over-5}

Let $L = \Ic^{n+1}$, where $\Ic$ is the icosian ring. The hermitian form
\eqref{eq:h-omega} has been chosen so that $(L, h)$ is of maximal type. 
By definition, $\Delta_n = \U(L, h)$.
The formula in \eqref{eq:vol-Lambda-Ic} for $\chi(\Delta_n)$ is thus an
immediate consequence of Theorem~\ref{thm:covolume-U-L-h}, since in this case
$\Ram = \emptyset$ (see \cite[p.150]{Vign80}).

\section{The minimality of $\chi(\Gamma^s_n)$ and $\chi(\Delta_n)$}
\label{sec:maximal-sbgps}

\subsection{Normalizers of minimal covolume}
Let $\Gamma \subset \Sp(n,1)$ be a \emph{maximal lattice}, i.e., maximal with respect 
to  inclusion as in Section \ref{sec:BHC-Godement}.  We have $\Gamma \subset \G(\overline{k} \cap \R)$ for some
admissible $k$-group $\G$, but the stricter inclusion $\Gamma \subset \G(k)$ does not
hold in general.  By \cite[Prop.~1.4]{BorPra89} we have that $\Gamma$ is a normalizer 
$N_{\G(\R)}(\Lambda_P)$, where $\Lambda_P \subset \G(k)$ is a principal
arithmetic subgroup. 

Let $P = (P_v)$ be a coherent collection such that any parahoric subgroup $P_v$
is of maximal volume. Then $\Lambda_P$ is of minimal covolume among arithmetic
lattices contained in $\G(k)$. It is a priori not clear -- but turns out to be
true -- that the normalizer $N_{\G(\R)}(\Lambda_P)$ for such a choice of $P$ is
of minimal covolume in its commensurability class in $\G(\R)$. This can be
proved in the same way as in \cite[Sect.~4.3]{BelEme} (see also 
\cite[Sect.~12.3]{EmePhD} for a more detailed exposition). We state the result in
the following lemma. In the rest of this section $\Ram$ denotes the set of
finite places where $\G$ does not split (equivalently, where its defining
algebra ramifies). 

\begin{lemma}
        \label{lemma:descript-local-minimal}
        The lattice $\Gamma = N_{\G(\R)}(\Lambda_P)$ is of
        minimal covolume in its commensurability class if and only if the
        coherent collection $P= (P_v)$ satisfies 
        \begin{enumerate}
                \item $P_v$ is hyperspecial for each $v\notin \Ram$; and
                \item $P_v$ is special of maximal volume for $v \in \Ram$.
        \end{enumerate}
\end{lemma}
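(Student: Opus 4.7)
The plan is to combine Prasad's volume formula with a Galois-cohomological computation of the index $[\Gamma : \Lambda_P]$, following closely the strategy in \cite[Sect.~4.3]{BelEme} (see also \cite[Sect.~12.3]{EmePhD}). Setting $\Gamma = N_{\G(\R)}(\Lambda_P)$, we use
\[
\chi(\Gamma) = \frac{\chi(\Lambda_P)}{[\Gamma : \Lambda_P]},
\]
and the goal is to show that conditions (1)--(2) simultaneously extremize this ratio over all coherent collections $P$ giving lattices in the commensurability class of $\G$.

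First I would analyze the numerator. By \eqref{eq:Prasad-final}, only the local factors $e'(P_v)$ depend on $P$. At a place $v \notin \Ram$ (where $\G_{k_v}$ is split of type $\tC_{n+1}$), $e'(P_v) = 1$ precisely when $P_v$ is hyperspecial and is strictly greater than $1$ otherwise, as follows directly from $\Mv \cong \cMv$ in the hyperspecial case and from the explicit expression \eqref{eq:ePv}. At $v \in \Ram$, Lemma~\ref{lemma:vol-eP-max} together with the remark after it yields $e'(P_v^s) \le e'(P_v^t)$ for every other type (with $s = n \bmod 2$), and any non-maximal parahoric produces a strictly larger $e'(P_v)$. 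Hence (1)--(2) minimize $\chi(\Lambda_P)$ in the commensurability class of $\G$.

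Second, I would bound the denominator via a variant of \cite[Prop.~2.9]{BorPra89}: the index factors as $[\Gamma : \Lambda_P] = \#\CG(k) \cdot |A_P|$, where $A_P$ is the subgroup of $\delta(\aG(k))'$ (defined in Sect.~\ref{sec:Galois-conj-action}) whose image under $\xi$ stabilizes each $P_v$ up to $\G(k_v)$-conjugation. At every $v \in \Ram$ the stabilizer condition is vacuous since $\Aut(\Delta_v) = 1$, so the ramified places contribute nothing to a potential loss. At split places the nontrivial element of $\Aut(\Delta_v) \cong \Z/2\Z$ swaps the two hyperspecial types (which give non-conjugate hyperspecial parahorics), so a hyperspecial choice does not locally maximize $|A_P|$; however, the local loss at each such place is at most a factor of $2$.

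The main obstacle is the quantitative comparison: one must verify that the potential factor-of-$2$ gain in $|A_P|$ at a split place from a symmetric non-hyperspecial choice is strictly dominated by the corresponding increase in $e'(P_v)$ at the numerator, and similarly that no non-maximal-volume choice at a ramified place can compensate its increase in $e'(P_v)$. Both inequalities follow termwise from the explicit formulas of \eqref{eq:ePv} and Lemma~\ref{lemma:vol-eP-max} (the smallest non-optimal value of $e'(P_v)$ is already considerably larger than $2$, even for $q_v = 2$), exactly as carried out in \cite[Sect.~4.3]{BelEme}. The strict character of these inequalities gives both implications of the "if and only if" statement, and concludes the proof.
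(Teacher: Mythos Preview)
Your proposal is correct and follows exactly the route the paper itself takes: the paper does not prove this lemma in the text but simply defers to \cite[Sect.~4.3]{BelEme} and \cite[Sect.~12.3]{EmePhD}, and you have faithfully sketched that argument (Prasad's formula for the numerator, the Borel--Prasad exact sequence for the index, and the termwise comparison showing that any non-optimal local choice costs strictly more than the factor of $2$ it could gain in the index). One small imprecision: the first term in the exact sequence \eqref{eq:Rohlfs-ext-sq} is $\CG(\R)/(\CG(k)\cap\Lambda_P)$, which here is trivial (since $-I \in \Lambda_P$), not $\#\CG(k)$; this does not affect your comparison.
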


Recall that for $v \in \Ram$ a parahoric subgroup $P_v \subset \G(k_v)$ is
special of maximal volume exactly when it has type $\Delta_v \smallsetminus
\left\{ \alpha_s \right\}$, where $s = n \mod 2$. 
Using  Lemmas \ref{lemma:stab-hypersp} and \ref{lemma:stab-spec} we thus have:

\begin{cor}
        \label{cor:max-type-min-covol}
        Let $(L, h)$ be an admissible $\O_D$-lattice of maximal type. If $n$ is
        even or $\Ram =\emptyset$,  
        then the lattice $N_{\Sp(n,1)}(\U(L, h))$ is of minimal covolume in its
        commensurability class.
\end{cor}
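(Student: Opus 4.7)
The plan is to exhibit $\U(L,h)$ as the principal arithmetic subgroup $\Lambda_P$ attached to a specific coherent collection $P = (P_v)_{v\in\Vf}$, and then show that this $P$ meets the two conditions in Lemma~\ref{lemma:descript-local-minimal}, so that $N_{\G(\R)}(\Lambda_P)$ is of minimal covolume in its commensurability class. Recall that $\G(\R) = \Sp(n,1)$ by admissibility, so such a normalizer is a lattice in $\Sp(n,1)$.

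First I would set $P_v = \U(L_v, h)$ for every $v \in \Vf$. That $\U(L,h) = \G(k) \cap \prod_v P_v$ is already shown in the proof of Theorem~\ref{thm:covolume-U-L-h}; combined with Lemmas~\ref{lemma:stab-hypersp} and \ref{lemma:stab-spec}, this realizes $\U(L,h)$ as a principal arithmetic subgroup $\Lambda_P$, and the collection $P$ is automatically coherent since $P_v$ is hyperspecial for all $v \notin \Ram$ (a finite set).

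Next I would verify the two conditions of Lemma~\ref{lemma:descript-local-minimal}. Condition~(1) is immediate from Lemma~\ref{lemma:stab-hypersp}. For condition~(2) there are two cases. If $\Ram = \emptyset$ there is nothing to check, and the conclusion is immediate. If instead $n$ is even, then $s = n \bmod 2 = 0$, and Lemma~\ref{lemma:descript-local-minimal} requires each $P_v$ ($v \in \Ram$) to be special of type $\Delta_v \smallsetminus \{\alpha_0\}$; this is exactly the content of Lemma~\ref{lemma:stab-spec}. In either case the hypotheses of Lemma~\ref{lemma:descript-local-minimal} are satisfied, whence $N_{\G(\R)}(\Lambda_P) = N_{\Sp(n,1)}(\U(L,h))$ is of minimal covolume in its commensurability class.

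There is really no substantial obstacle here: once the previous sections are in place, the corollary is a direct assembly of Theorem~\ref{thm:covolume-U-L-h}, Lemmas~\ref{lemma:stab-hypersp}--\ref{lemma:stab-spec}, and Lemma~\ref{lemma:descript-local-minimal}. The only point worth flagging is the parity hypothesis: Lemma~\ref{lemma:stab-spec} always produces parahorics of type $\Delta_v \smallsetminus \{\alpha_0\}$ at ramified places, while maximal volume at $v \in \Ram$ requires type $\Delta_v \smallsetminus \{\alpha_s\}$ with $s = n \bmod 2$. The two types coincide precisely when $n$ is even (or $\Ram$ is empty), which is exactly the dichotomy in the hypothesis. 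For $n$ odd with $\Ram \neq \emptyset$ one would need a different lattice construction at ramified places to reach type $\Delta_v \smallsetminus \{\alpha_1\}$, and this is the reason such cases are excluded from this corollary (and are handled separately in the treatment of $\Gamma^1_n$).
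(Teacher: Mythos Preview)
Your proposal is correct and follows exactly the route the paper takes: the corollary is stated immediately after the sentence ``Using Lemmas~\ref{lemma:stab-hypersp} and~\ref{lemma:stab-spec} we thus have,'' and your argument simply spells out how those lemmas feed into Lemma~\ref{lemma:descript-local-minimal}. Your additional remark explaining why the parity hypothesis is needed is accurate and matches the paper's later handling of $\Gamma^1_n$.
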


\subsection{The index computation}
We will need to estimate the index $[\Gamma : \Lambda_P]$ for 
$\Gamma = N_{\G(\R)}(\Lambda_P)$ of minimal covolume in its commensurability
class. For this we state the following lemma, which considers a slightly more general
situation. The symbol $h_k$ denotes the class number of $k$, and $U_k$ (resp.\ $U_k^+$) 
are the units (resp.\ totally positive units) in $\O_k$. 
\begin{lemma}
        \label{lemma:index}
        Let $P = (P_v)$ such that $P_v$ is hyperspecial for any $v \in \Vf
        \smallsetminus \Ram$. Then
       \begin{align*}
               [\Gamma : \Lambda_P] &\le 2^{\#\Ram} \cdot h_k \cdot |U^+_{k}/U_k^2|.
               %\label{eq:index}
       \end{align*}
       %where $T$ is the set of places $v \in \Vf$ where $P_v$ is not hyperspecial.
\end{lemma}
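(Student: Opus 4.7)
The plan is to follow the Borel-Prasad framework for bounding the index of a principal arithmetic subgroup inside its normalizer, in the spirit of \cite[Sect.~4.3]{BelEme} and \cite[Sect.~12.3]{EmePhD}. Since $\CG(\R) = \{\pm I\}$ acts as scalars and hence lies in every parahoric subgroup, the center sits inside $\Lambda_P \subset \Gamma$; thus $\pi$ induces an equality $[\Gamma : \Lambda_P] = [\pi(\Gamma) : \pi(\Lambda_P)]$. Moreover, because $\Gamma$ normalizes the $k$-rational Zariski-dense subgroup $\Lambda_P$, its image $\pi(\Gamma)$ lies in $\aG(k)$.

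Applying the connecting homomorphism $\delta$ from \eqref{eq:delta}, the restriction $\delta|_{\pi(\Gamma)}$ takes values in $H^1(k, \CG) = k^\times/(k^\times)^2$ (using $\CG \cong \mu_2$), with kernel $\pi(\Gamma \cap \G(k))$. The normalizer $\Gamma \cap \G(k)$ of $\Lambda_P$ inside $\G(k)$ coincides with $\Lambda_P$, since at each $v \notin \Ram$ the hyperspecial $P_v$ is self-normalizing inside the simply connected split group $\Sp_{2n+2}(k_v)$, and at each $v \in \Ram$ the special parahoric of maximal volume is likewise self-normalizing (these local facts follow from the Bruhat-Tits structure recalled in Sect.~\ref{sec:parah-Galois-cohom}). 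Consequently $[\Gamma : \Lambda_P] = |\delta(\pi(\Gamma))|$, and it remains to bound this cardinality.

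Two sets of constraints apply. Globally, Lemma~\ref{lemma:A-dGk} gives $\delta(\pi(\Gamma)) \subset \delta(\aG(k))'$, identified under $H^1(k, \CG) = k^\times/(k^\times)^2$ with the totally positive classes $k^{\times +}/(k^\times)^2$. Locally, for each $v \in \Vf$, conjugation by $\pi(\Gamma)$ must stabilize the type $\Theta_v$ of $P_v$, so via the map $\xi$ of Sect.~\ref{sec:Galois-conj-action} the class at $v$ lies in the stabilizer of $\Theta_v$ inside $\Aut(\Delta_v)$. For $v \in \Ram$ this is vacuous since $\Aut(\Delta_v) = 1$; for $v \notin \Ram$ the hyperspecial type in the split diagram \eqref{eq:local-dynkin-split} has trivial stabilizer under the $\Z/2$-symmetry that swaps the two hyperspecial vertices, forcing the class at $v$ to land in the kernel of $H^1(k_v, \CG) \to \Aut(\Delta_v)$, which amounts to $v(x)$ being even.

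Hence $\delta(\pi(\Gamma))$ is contained in $S = \{x \in k^{\times +}/(k^\times)^2 : v(x) \text{ is even for all } v \notin \Ram\}$. The map $x \mapsto (v(x) \bmod 2)_{v \in \Ram}$ sends $S$ into $(\Z/2)^{\#\Ram}$ with image of size at most $2^{\#\Ram}$, while its kernel consists of classes $x$ with $(x) = \mathfrak{b}^2$ for some fractional ideal $\mathfrak{b}$ of $\O_k$; since $\mathfrak{b}^2$ is principal, $[\mathfrak{b}]$ is $2$-torsion in the class group, and for each such class the totally positive representatives form a torsor under $U_k^+/U_k^2$. This yields $|S| \le 2^{\#\Ram} \cdot h_k \cdot |U_k^+/U_k^2|$, completing the proof. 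The main obstacle is the verification of the self-normalizing property at ramified places and the precise identification of the kernel of $\xi$ at hyperspecial places, both of which reduce to purely local Bruhat-Tits considerations.
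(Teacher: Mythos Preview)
Your proof is correct and follows essentially the same route as the paper's: both identify $[\Gamma:\Lambda_P]$ with the order of the type-stabilizing subgroup of $\delta(\aG(k))'$ inside $H^1(k,\mu_2)=k^\times/(k^\times)^2$, and then bound that group by $2^{\#\Ram}\cdot h_k\cdot|U_k^+/U_k^2|$ via the valuation map and the class-group argument. The only cosmetic differences are that the paper invokes \cite[Prop.~2.9]{BorPra89} directly for the exact sequence (absorbing the center contribution and the self-normalizing step you spell out by hand), and that at $v\in\Ram$ the lemma's hypothesis says nothing about the type of $P_v$ --- but since every parahoric in a simply connected group is self-normalizing, your argument goes through unchanged.
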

\begin{proof}
        We assume the notation of Sect.~\ref{sec:Galois-conj-action}; in
        particular, $\CG$ is the center of $\G$. We set $A = \delta(\aG(k))'$.
        Let $\Theta = (\Theta_v)_{v\in\Vf}$ be the type of the coherent collection $P$.
        From the assumption it follows that none of the
        types $\Theta_v$ has symmetries, and thus the stabilizer of $\Theta$ in
        $A$ equals the kernel $A_\xi$ of $\xi$.
        By \cite[Prop.~2.9]{BorPra89} we thus have the exact sequence 
        \begin{align}
                \label{eq:Rohlfs-ext-sq}
                1 \to \CG(\R)/(\CG(k) \cap \Lambda_P) \to \Gamma/\Lambda_P \to A_\xi \to 1.
        \end{align}
        In our case $\CG = \mu_2$, and it follows that the left part vanishes. Hence,
        $[\Gamma:\Lambda_P] = |A_\xi|$. We now use the identification $H^1(k, \mu_2) =
        k^\times/(k^\times)^2$. For $S \subset \Vf$ any finite set of places we
        define
        \begin{align*}
                k_{2, S} = \left\{\left. x \in k^\times \; \right| \; v(x) \in
                2\Z \;\; \forall v \in \Vf \smallsetminus S \right\},
        \end{align*}
        and $k_2 = k_{2, \emptyset}$. Since $\Aut(\Delta_v)$ is trivial for
        any $v \in \Ram$, it follows from \cite[Prop.~2.7]{BorPra89} that
        $H^1(k, \CG)_\xi = k_{2, \Ram}/(k^\times)^2$. See \cite{BorPra89} for the definitions. For $k_{2, S}^+ \subset
        k_{2, S}$ denoting the subgroup consisting of totally positive elements, we
        conclude from Lemma~\ref{lemma:A-dGk} that $A_\xi = k_{2,
        \Ram}^+/(k^\times)^2$. This group contains $k_2^+/(k^\times)^2$
        with index at most $2^{\#\Ram}$, and the order of the latter can be bounded by $h_k \cdot
        |U^+_{k}/U_k^2|$ by using the same argument as in the proof of \cite[Prop.~0.12]{BorPra89}.
\end{proof}

The following is obtained as a corollary of the proof.

\begin{cor}
        \label{cor:index-Q} 
        Let $k = \Q$ and $\#\Ram = 1$, and assume $P$ as above. Then $[\Gamma :
        \Lambda_P] = 2$.
\end{cor}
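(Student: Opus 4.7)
The plan is to specialize the proof of Lemma~\ref{lemma:index} to the situation $k=\Q$, $\#\Ram=1$, and then to exhibit an explicit nontrivial class that saturates the inequality.

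First I would observe that the proof of Lemma~\ref{lemma:index} actually gives the \emph{equality} $[\Gamma:\Lambda_P]=|A_\xi|$, because in the Rohlfs exact sequence \eqref{eq:Rohlfs-ext-sq} the left term $\CG(\R)/(\CG(k)\cap \Lambda_P)$ vanishes (as $\CG=\mu_2$ and $-I\in\Lambda_P$). Moreover, it was shown there that $A_\xi = k_{2,\Ram}^+/(k^\times)^2$. So specializing to $k=\Q$ the problem reduces to computing
\begin{align*}
        |A_\xi| \;=\; \bigl|\Q_{2,\Ram}^+/(\Q^\times)^2\bigr|.
\end{align*}

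Next I would write $\Ram=\{p\}$ for some rational prime $p$, and get the upper bound from the lemma: since $h_\Q=1$ and $U_\Q^+/U_\Q^2=\{1\}/\{1\}$ is trivial, the lemma yields $[\Gamma:\Lambda_P]\le 2^{\#\Ram}\cdot h_\Q\cdot |U_\Q^+/U_\Q^2|=2$.

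For the lower bound, I would exhibit the class of $p$ itself. The element $p\in\Q^\times$ is totally positive (it is simply a positive rational), and $v_q(p)=0\in 2\Z$ for every prime $q\neq p$, so $p\in \Q_{2,\Ram}^+$. On the other hand $p$ is squarefree, so its class in $\Q^\times/(\Q^\times)^2$ is nontrivial. Hence $|A_\xi|\ge 2$, and combined with the upper bound we conclude $[\Gamma:\Lambda_P]=|A_\xi|=2$.

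There is no serious obstacle here: the only points to double-check are that the left-hand factor in \eqref{eq:Rohlfs-ext-sq} is indeed trivial (so the bound of Lemma~\ref{lemma:index} is attained by $|A_\xi|$) and that the identification $A_\xi=\Q_{2,\Ram}^+/(\Q^\times)^2$ established in the proof of the lemma carries over verbatim; both hold because the hypothesis on $P$ matches that of Lemma~\ref{lemma:index}.
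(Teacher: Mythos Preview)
Your proof is correct and follows essentially the same approach as the paper: apply the upper bound from Lemma~\ref{lemma:index} (which gives $\le 2$ since $h_\Q=1$ and $U_\Q^+/U_\Q^2$ is trivial), and then observe that the prime $p$ with $\Ram=\{p\}$ furnishes a nontrivial class in $\Q_{2,\Ram}^+/(\Q^\times)^2$ to force equality. You have simply made explicit what the paper leaves implicit, namely that the proof of Lemma~\ref{lemma:index} yields the exact equality $[\Gamma:\Lambda_P]=|A_\xi|$ via the vanishing of the left term in \eqref{eq:Rohlfs-ext-sq}.
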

\begin{proof}
        Let $\Ram = \left\{ p \right\}$, with $p>0$. Lemma~\ref{lemma:index} shows
        $[\Gamma:\Lambda_P] \le 2$, but on the other hand $p$ provides a
        nontrivial element in $\Q^+_{2, \Ram}/(\Q^\times)^2$. 
\end{proof}

\subsection{The nonuniform lattices $\Gamma_n^s$}

%From Corollary \ref{cor:max-type-min-covol} we have that the normalizer of
%$\Sp(n,1, \Hu)$ is of  

By definition $\Gamma^0_n = \left< g, \Sp(n,1, \Hu) \right>$ with $g$ of order $2$ that
normalizes $\Sp(n,1, \Hu)$. Let $D$ be the defining algebra of $\Gamma_n^0$
(i.e., $D = \Hu \otimes \Q$). It ramifies precisely at $p=2$ and $p=\infty$.
Thus we can apply Corollary~\ref{cor:index-Q}, and it follows that $\Gamma_n^0$
coincides with the normalizer of $\Sp(n,1, \Hu)$ in $\Sp(n,1)$. For $n$ even, 
Corollary \ref{cor:max-type-min-covol} then shows that $\Gamma^0_n$ is of minimal 
covolume in its commensurability class. On the other hand, Rohlfs' criterion
\cite[Satz~3.5]{Rohlfs79} shows that $\Gamma_n^0$ is maximal (w.r.t.\ inclusion)
for any $n>1$.

For $n>1$ odd, we construct $\Gamma_n^1$ as follows. Let $\G$ be the $\Q$-group
that contains $\Sp(n,1, \Hu)$. We choose a coherent collection $P = (P_v)$ with
$P_v$ hyperspecial for $v \neq 2$, and $P_v = P^1_v$ of type $\Delta_v
\smallsetminus \left\{ \alpha_1 \right\}$ for $v = 2$. Let $\Gamma_n^1 =
N_{\G(\R)}(\Lambda_P)$. By Lemma~\ref{lemma:descript-local-minimal} it is of
minimal covolume in its commensurability class. Moreover, by
Corollary~\ref{cor:index-Q} we have $[\Gamma_n^1:\Lambda_P] = 2$. In particular,
\begin{align}
        \label{eq:vol-Gamma-1-eP}
        \chi(\Gamma_n^1) &= \frac{e'(P^1_2)}{e'(P^0_2)} \chi(\Gamma_n^0),
\end{align}
from which we obtain the formula in \eqref{eq:vol-Gamma-1} with
Lemma~\ref{lemma:vol-eP-max}. 

%It follows that $\Gamma_n$ coincides with the normalizer of
%$\Sp(n,1, \Hu)$, and thus -- by Corollary \ref{cor:max-type-min-covol} -- is 
%of minimal covolume in its commensurability class. 

The following proposition now implies -- up to the uniqueness --
Theorems \ref{thm:nonunif} and \ref{thm:nonunif-odd}. 
\begin{prop}
        \label{prop:min-nonunif}
       Let $\Gamma \subset \Sp(n,1)$ be a nonuniform lattice of minimal
       covolume, and let $s= (n \mod 2)$. Then $\Gamma$ is commensurable to $\Gamma^s_n$, and they have the same
       covolume.
\end{prop}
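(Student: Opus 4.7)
The plan is to exhibit $\chi(\Gamma^s_n)$ as the minimum covolume over all commensurability classes of nonuniform arithmetic lattices. By the compactness criterion of Sect.~\ref{sec:BHC-Godement}, $\Gamma$ is defined over $k = \Q$, and by Proposition~\ref{prop:D-determines-h} its commensurability class is determined by the defining quaternion $\Q$-algebra $D$, hence by the finite part $\Ram \subset \Vf$ of the ramification set of $D$. Admissibility forces $D$ to ramify at $\infty$ (Corollary~\ref{cor:D-admissible}), so $\#\Ram$ is odd and in particular at least $1$.

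I first fix such a commensurability class. By Lemma~\ref{lemma:descript-local-minimal} I may assume $\Gamma = N_{\G(\R)}(\Lambda_P)$ for $\G$ admissible over $\Q$ with defining algebra $D$, and $P = (P_v)$ a coherent collection that is hyperspecial outside $\Ram$ and of type $\Delta_v \smallsetminus \{\alpha_s\}$ (with $s = n \bmod 2$) at each $v \in \Ram$. Combining Prasad's formula \eqref{eq:Prasad-final}, Lemma~\ref{lemma:vol-eP-max}, and the index bound $[\Gamma:\Lambda_P] \le 2^{\#\Ram}$ from Lemma~\ref{lemma:index} (valid over $\Q$ since $h_\Q = 1$ and $U_\Q^+ = U_\Q^2$), I obtain the lower bound
\begin{align*}
        \chi(\Gamma) \;\ge\; \frac{n+1}{2^{\#\Ram}}\,\prod_{p \in \Ram} e'(P^s_p)\,\prod_{j=1}^{n+1} \frac{|\zeta(1-2j)|}{2}.
\end{align*}
In the distinguished case $\Ram = \{2\}$, Corollary~\ref{cor:index-Q} makes this an equality (so $[\Gamma:\Lambda_P] = 2$), and the right-hand side equals $\chi(\Gamma^s_n)$ by the constructions of $\Gamma^0_n$ and $\Gamma^1_n$ given above.

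The crux is therefore the numerical inequality
\begin{align*}
        \prod_{p \in \Ram} e'(P^s_p) \;>\; 2^{\#\Ram - 1}\, e'(P^s_2)
\end{align*}
for every admissible $\Ram \neq \{2\}$. From Lemma~\ref{lemma:vol-eP-max}, $p \mapsto e'(P^s_p)$ is a strictly increasing polynomial in $p$. When $\#\Ram = 1$, the claim reduces to $e'(P^s_p) > e'(P^s_2)$ for $p \ge 3$, i.e., to this monotonicity. When $\#\Ram = r \ge 3$, writing $\Ram = \{p_1 < \cdots < p_r\}$, I would bound $e'(P^s_{p_i}) \ge e'(P^s_3)$ for $i \ge 2$ (since then $p_i \ge 3$) and then invoke the elementary estimate $e'(P^s_3) \ge 4 > 2$, which is immediate from the explicit polynomial formulas.

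The principal obstacle lies in the low-dimensional regime of this last step, where the inequalities are tightest; however, since the local factors are explicit polynomials in $p$ with sufficiently large leading behaviour already for $n=2,3$, a direct evaluation in each subcase confirms strict inequality. Assembling the pieces, $\chi(\Gamma) \ge \chi(\Gamma^s_n)$, with equality forcing $\Ram = \{2\}$ and placing $\Gamma$ in the commensurability class of $\Gamma^s_n$; the proposition follows.
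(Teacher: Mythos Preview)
Your proof is correct and follows essentially the same route as the paper's own argument: reduce to $k=\Q$ via the compactness criterion, write $\Gamma = N_{\G(\R)}(\Lambda_P)$ with $P$ as in Lemma~\ref{lemma:descript-local-minimal}, combine \eqref{eq:Prasad-final} with the index bound of Lemma~\ref{lemma:index} to obtain the lower bound $(n+1)\prod_{v\in\Ram}\tfrac{e'(P_v)}{2}\prod_j\tfrac{|\zeta(1-2j)|}{2}$, and then observe that the middle factor is uniquely minimized at $\Ram=\{2\}$, where the bound equals $\chi(\Gamma^s_n)$. The paper states this last minimization in a single sentence without justification; your explicit verification via monotonicity in $p$ and the crude bound $e'(P^s_3)\ge 4$ is a welcome addition, and the parity constraint $\#\Ram$ odd (which you note) is what rules out $\#\Ram=2$.
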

\begin{proof}
        We have seen that $\Gamma^s_n$ is of minimal covolume in its
        commensurability class, so  it suffices to prove the commensurability. By Sect.~\ref{sec:arithm-sbgp}, 
        $\Gamma$ nonuniform is constructed as an
        arithmetic subgroup of $\G = \U(V, h)$ for $(V, h)$ admissible, and
        $V$ a vector space over a quaternion $\Q$-algebra $D$.
        Let $\Ram$ be the set of places $v$ such that $D_v$ ramifies.
        Being of minimal covolume, we may write $\Gamma = N_{\G(\R)}(\Lambda_P)$, with $P_v$
        hyperspecial unless $v \in \Ram$ (by Lemma~\ref{lemma:descript-local-minimal}).
        Moreover, for $v \in \Ram$ the subgroup $P_v$ is of maximal volume and
        thus of type $\Delta \smallsetminus \left\{ \alpha_s \right\}$ by
        Sect.~\ref{sec:parah-sbgps}. 
        %and Remark \ref{rmk:min-T-equals-Ram}).
        By Lemma~\ref{lemma:index} (with $k=\Q$) we have $[\Gamma:\Lambda_P] \le 2^{\#\Ram}$. 
        Together with Equation~\eqref{eq:Prasad-final} this gives: 
        \begin{align}
                \label{eq:up-bound-nonunif}
                \chi(\Gamma) &\ge
                \frac{\chi(\Lambda_P)}{[\Gamma : \Lambda_P]} \nonumber\\ 
                &\ge (n+1) \prod_{v \in \Ram} \frac{e'(P_v)}{2}
                \prod_{j=1}^{n+1} \frac{\zeta(1-2j)}{2}. 
        \end{align}
        Only the middle factor in \eqref{eq:up-bound-nonunif} depends on the
        choice of $\Lambda_P$, and  it takes the smallest possible value for $\Ram = \left\{ 2 \right\}$
        (note that $\Ram = \emptyset$ cannot appear here; see
        Sect.~\ref{sec:condition-on-D-h-unique}). But in that case this lower bound is precisely $\chi(\Gamma^s_n)$,
        whence $\chi(\Gamma) = \chi(\Gamma^s_n)$ (by minimality of $\chi(\Gamma))$.
        Since $D$ is now ramified exactly at $p = 2$ and $p=\infty$, it has same
        defining algebra as $\Gamma^0_n$ and the commensurability follows from
        Sect.~\ref{sec:BHC-Godement}.
\end{proof}

\subsection{The minimality of $\chi(\Delta_n)$} 
\label{sec:minim-Delta}

We now discuss the uniform case. Recall that the covolume of $\Delta_n$ has been discussed in
Sect.~\ref{sec:L-h-over-5}. Lemma \ref{lemma:index} (with $k =
\Q(\sqrt{5})$ and $\Ram = \emptyset$) implies that $\Delta_n$ coincides with its
own normalizer in $\Sp(n,1)$. Corollary~\ref{cor:max-type-min-covol} thus
implies that $\Delta_n$ is of minimal covolume in its commensurability class.
The following proposition proves the first statement in Theorem \ref{thm:unif}. 
In the proof we omit details that should be clear from the proof of Proposition~\ref{prop:min-nonunif}.

\begin{prop}
       \label{prop:min-unif}
       Let $\Gamma \subset \Sp(n,1)$ be a uniform lattice of minimal covolume. 
       Then $\Gamma$ is commensurable to $\Delta_n$, and they have the same
       covolume.
\end{prop}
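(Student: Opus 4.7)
The argument is a direct analogue of that of Proposition~\ref{prop:min-nonunif}, but the field $k$ of definition of $\Gamma$ is now no longer forced to be $\Q$, which introduces several new complications. By Godement's compactness criterion, $\Gamma$ being uniform forces $d = [k:\Q] \geq 2$, so $\D_k \geq 5$ with equality precisely for $k = \Q(\sqrt{5})$. As in the nonuniform case, $\Gamma$ is maximal and so can be written as $\Gamma = N_{\G(\R)}(\Lambda_P)$ with $\G = \U(V,h)$ admissible over $k$ and $P = (P_v)$ a coherent collection meeting the conditions of Lemma~\ref{lemma:descript-local-minimal}. Combining the volume formula \eqref{eq:Prasad-zeta} with the index bound of Lemma~\ref{lemma:index} yields
\begin{align*}
\chi(\Gamma) \;\geq\; \frac{(n+1)\,\D_k^{\dim \G/2}}{2^{\#\Ram}\, h_k\, |U_k^+/U_k^2|} \prod_{v \in \Ram} e'(P_v^s) \prod_{j=1}^{n+1} \left(\frac{(2j-1)!}{(2\pi)^{2j}}\right)^{d} \zeta_k(2j).
\end{align*}

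The plan is to show that this lower bound is minimized precisely at $(k,\Ram) = (\Q(\sqrt{5}),\emptyset)$, and that the minimum equals $\chi(\Delta_n)$. For this distinguished pair one has $h_k = 1$, and since the fundamental unit $\epsilon = (1+\sqrt{5})/2$ has norm $-1$ a direct check gives $U_k^+ = \langle \epsilon^2 \rangle = U_k^2$, so $|U_k^+/U_k^2| = 1$; thus the index bound is actually an equality, $\Gamma = \Lambda_P$, and we recover $\chi(\Delta_n)$ via the computation of Sect.~\ref{sec:L-h-over-5}. For every other admissible $(k,\Ram)$ one must establish strict inequality. Since Lemma~\ref{lemma:vol-eP-max} gives $e'(P_v^s) > 2$ at each $v \in \Ram$, enlarging $\Ram$ strictly increases the lower bound, and the problem reduces to ruling out other fields $k$ with $\Ram = \emptyset$.

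To control the remaining dependence on $k$, I would invoke Minkowski/Odlyzko lower bounds on $\D_k^{1/d}$, together with bounds on $h_k \cdot |U_k^+/U_k^2|$ as a function of $\D_k$ of the type used in \cite[Sect.~6]{BorPra89}, and the elementary estimate $\zeta_k(2j) > 1$. The exponent $\dim \G/2 = (n+1)(2n+3)/2$ ensures that the factor $\D_k^{\dim \G/2}$ grows fast enough in $\D_k$ to dominate the index correction, so that for all but finitely many totally real fields the lower bound on $\chi(\Gamma)$ already exceeds $\chi(\Delta_n)$. The main obstacle is the resulting finite check: one must enumerate the remaining totally real fields of small discriminant (next in line after $\Q(\sqrt{5})$ being $\Q(\sqrt{2})$, $\Q(\sqrt{3})$, $\Q(\sqrt{13})$, and a few small cubic/quartic fields) and verify numerically in each case that the resulting lower bound strictly exceeds $\chi(\Delta_n)$. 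Once this minimality among admissible pairs is established, commensurability with $\Delta_n$ follows because the commensurability class of an admissible arithmetic subgroup is determined by $(k,D)$ according to Sect.~\ref{sec:BHC-Godement}, and equality of covolumes is then automatic.
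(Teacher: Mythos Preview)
Your proposal is correct and follows essentially the same approach as the paper: combine \eqref{eq:Prasad-zeta} with Lemma~\ref{lemma:index}, absorb $2^{-\#\Ram}$ into $\prod_{v\in\Ram} e'(P_v) > 2^{\#\Ram}$, use $\zeta_k(2j)>1$ together with class-number and Odlyzko discriminant bounds, and finish with a finite numerical check. The paper carries this out by writing down an explicit lower bound $f(n,d,\D_k)$ for the ratio $\chi(\Gamma)/\chi(\Delta_n)$ and, in addition, reduces the $n$-dependence to the range $2\le n\le 13$ (observing that $C(n)/2>1$ for $n\ge 13$), so that the residual numerical verification is uniform in $n$---a point you left implicit.
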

\begin{proof}
        Let $\Gamma$ be a uniform lattice of minimal covolume, and let $k$ be its field of
        definition. Then $k$ is totally real, of degree $d \ge 2$.  Assume first
        that $k= \Q(\sqrt{5})$, and let $D$ be the defining algebra of $\Gamma$.
        It is clear that any nontrivial local factor $e'(P_v)$ would
        contribute to increase the volume formula, and this shows that $D$ does
        not ramify at any finite place; this implies that $\Gamma$ is
        commensurable to $\Delta_n$. Thus, it suffices to prove that $k =
        \Q(\sqrt{5})$, i.e., that $d = 2$ and $\D_k = 5$ (recall that $\D_k$
        denotes the discriminant in absolute value). 

        Let $\G$ be the algebraic $k$-group used to construct the arithmetic subgroup $\Gamma$, and let
        us write $\Gamma = N_{\G(\R)}(\Lambda_P)$ with $P = (P_v)$ a
        coherent collection of parahoric subgroups $P_v \subset \G(k_v)$ (of
        maximal volume).
        Combining \eqref{eq:Prasad-zeta} and Lemma~\ref{lemma:index} we find:
        \begin{align*}
                \chi(\Gamma) &\ge \frac{(n+1) \D_k^{\dim\G/2}}{2^{\#\Ram}\;
        h_k \; |U_k^+/U_k^2|} \; C(n)^d \; \prod_{v \in \Ram} e'(P_v)
        \prod_{j=1}^{n+1} \zeta_k(2j),
        \end{align*}
        with
        \begin{align}
                C(n) &= \prod_{j=1}^{n+1} \frac{(2j-1)!}{(2\pi)^{2j}}.
                \label{eq:C-n}
        \end{align}
        We clearly have $\zeta_k(2j) > 1$, and $|U_k^+/U_k^2| \le 2^{d-1}$ (by
        Dirichlet's unit theorem). Moreover, $e'(P_v) > 2$ for any $v \in \Ram$,
        so that the factor $2^{-\#\Ram}$ is compensated by the product of those
        local factors. We can use the bound $h_k \le 16 \left( \pi/12 \right)^d
        \D_k$ (see \cite[Sect. 7.2]{BelEme}: the argument given there for a non-totally
        real field $\ell$ provides the same bound for $k$; see  \cite[Sect.~15.2]{EmePhD} for details). This gives
        \begin{align*}
                \chi(\Gamma) &\ge \frac{(n+1)}{16\; \D_k\; 2^{d-1}}\;
                \left( \frac{12}{\pi} \right)^d
                \D_k^{\dim\G/2} \; C(n)^d.
        \end{align*}
        On the other hand we have 
        \begin{align*}
                \chi(\Delta_n) &= (n+1)\, 5^{\dim\G/2}\, C(n)^2 \prod_{j=1}^{n+1} \zeta_{\Q(\sqrt{5})}(2j) \\
                &< 1.2 \cdot (n+1)\, 5^{\dim\G/2}\, C(n)^2.
        \end{align*}
        Here we bound the product of zeta functions by the value $1.2$ by adapting the proof in \cite[p.760: proof of ($*$)]{Belo04}
        as follows (see \emph{loc.\ cit.} for details):
        \begin{align*}
                \prod_{j=1}^{n+1} \zeta_{\Q(\sqrt{5})}(2j) &< \zeta_{\Q(\sqrt{5})}(2)\, \zeta_{\Q(\sqrt{5})}(4)\, \zeta_{\Q(\sqrt{5})}(6)\,
                \prod_{j=4}^\infty \left(1 + \frac{2}{2^{2j}}\right)^2 \\
                                                           &< \zeta_{\Q(\sqrt{5})}(2) \,\zeta_{\Q(\sqrt{5})}(4) \,\zeta_{\Q(\sqrt{5})}(6) \,e^{1/48},
        \end{align*}
        and we evaluate this last bound with \textsf{Pari/GP} \cite{PARI2}.

        For the quotient this gives
        \begin{align}
                \label{eq:chi-Gamma-Delta-1}
                \frac{\chi(\Gamma)}{\chi(\Delta_n)} &> \frac{1}{39\, \D_k}
                \left( \frac{12}{\pi} \right)^d
                        \left( \frac{\D_k}{5} \right)^{\frac{(n+1)(2n+3)}{2}}
                        \left(\frac{C(n)}{2}  \right)^{d-2}.
        \end{align}
        Let us write $f(n, d, \D_k)$ for the bound on the right hand side. We
        have to show that $f(n, d, \D_k) \ge 1$ unless $d=2$ and $\D_k=5$. 

        The constant $C(n)/2$ is larger than $1$ for $n\ge 13$, and grows
        monotone from that point. Thus $f(n, d, \D_k) \ge f(13, d, \D_k)$, and
        it suffices to consider the range $n \in \left\{ 2, \dots, 13 \right\}$.
        For $d=2$ the smallest discriminant after $5$ is $\D_k = 8$, and
        numerical evaluation shows that $f(n, 2, 8) > 1$ for all $n \le 13$. This shows
        $d > 2$. For $k$ totally real of degree $d = 3$ the lowest
        discriminant is $\D_k = 49$. Again we check that $f(n, 3, 49) > 1$.
        And similarly with $d = 4$ and $\D_k \ge 725$. 
       
        It remains to exclude $d \ge 5$. In that case we use the
        following bound due to Odlyzko (see \cite[Tab.~4]{Odl90}) : $\D_k > (6.5)^d$.
        Then Equation~\eqref{eq:chi-Gamma-Delta-1} transforms into:
        \begin{align}
                \label{eq:chi-Gamma-Delta-2}
                \frac{\chi(\Gamma)}{\chi(\Delta_n)} &> \frac{1}{39 \cdot 5}
                \left( \frac{12}{\pi} \right)^2
                        \left( \frac{(6.5)^2}{5} \right)^{\delta(n)}
                        a(n)^{d-2},
        \end{align}
        where $a(n) = (6/\pi) C(n) (6.5)^{\delta(n)}$ and $\delta(n) =
        \dim\G/2 -1$. We check that $a(n) > 1$
        for all $n \in \left\{ 2, \dots, 13 \right\}$. The product preceeding
        $a(n)$ in \eqref{eq:chi-Gamma-Delta-2} is also easily seen to be (much) larger
        than $1$. This finishes the proof. 
\end{proof}

\subsection{The proof of Corollary \ref{cor:growth}}
\label{sec:proof-growth}
We have that each of $\chi(\Delta_n)$, $\chi(\Gamma^s_n)$, and their quotients
contains a factor $C(n)$ (which is given in \eqref{eq:C-n}); see
Equation~\eqref{eq:Prasad-zeta}. For $n$ large enough it is easily seen that this
factor grows faster than (say) $(2n+1)!$, i.e., it grows super-exponentially. This implies
immediately that $\chi(\Delta_n)$ and $\chi(\Gamma^s_n)$ grow
super-exponentially, as their remaining factors also increase with $n$. 
Moreover, the other factors appearing in $\chi(\Gamma^s_n)$ grow at most exponentially, so
that the $\chi(\Delta_n)/\chi(\Gamma^s_n)$ has a super-exponentially growth as
well.

\section{Proof of the uniqueness}
\label{sec:unique}

In this section we complete the proof of Theorems
\ref{thm:nonunif}-\ref{thm:unif}, by showing the uniqueness statements.

\subsection{The surjectivity of the adjoint map}
\label{sec:surjec-G-R)}

We start by proving the following auxiliary result.

\begin{lemma}
        \label{lemma:Sp-surj-PSp}
       For $n>1$ and $\G$ admissible for $\Sp(n,1)$, the map $\pi : \G(\R) \to
       \aG(\R)$ is surjective. 
\end{lemma}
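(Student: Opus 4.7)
The plan is to use the long exact cohomology sequence \eqref{eq:delta} specialized to $K=\R$, in the form
\begin{align*}
\G(\R) \stackrel{\pi}{\to} \aG(\R) \stackrel{\delta}{\to} H^1(\R,\CG) \to H^1(\R,\G).
\end{align*}
Thus surjectivity of $\pi$ is equivalent to $\delta = 0$, which by exactness is equivalent to the injectivity of the map $H^1(\R,\CG)\to H^1(\R,\G)$ into the next term. The goal is therefore to show that this last map has trivial kernel.

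Next I would identify both sides concretely. Since $\G$ is simply connected of type $\tC_{n+1}$, its center is $\CG=\mu_2$, so
\begin{align*}
H^1(\R,\CG) = \R^\times/(\R^\times)^2 \cong \Z/2\Z.
\end{align*}
On the other hand, since $\G=\U(V,h)$ and $\G(\R)\cong\Sp(n,1)$ (via the fixed embedding $k\subset\R$ and Corollary~\ref{cor:D-admissible}, which gives $D_\infty\cong\H$), the pointed set $H^1(\R,\G)$ classifies nondegenerate hermitian forms on $V_\R\cong\H^{n+1}$ up to isometry; by \cite[10.1.8~(iii)]{Schar85} such forms are classified by their signature $(p,q)$ with $p+q=n+1$.

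The heart of the argument is to recognise the image of the nontrivial class of $H^1(\R,\mu_2)$: under the twisting interpretation of nonabelian cohomology, the class of $-1\in\R^\times/(\R^\times)^2$ in $H^1(\R,\G)$ is represented by the $\mu_2$-twist of $(V_\R,h)$, which is the hermitian space $(V_\R,-h)$. By admissibility $(V_\R,h)$ has signature $(n,1)$, so $(V_\R,-h)$ has signature $(1,n)$. Since $n>1$, these two signatures are distinct, and hence $(V_\R,h)$ and $(V_\R,-h)$ are not isometric; equivalently, the twist class is nontrivial in $H^1(\R,\G)$. This gives the required injectivity, and hence surjectivity of $\pi$.

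The only delicate point is the identification of the quadratic twist with the form $-h$; after that, the signature comparison is immediate and uses the hypothesis $n>1$ in an essential way (the argument would actually fail for $n=1$, where $(1,1)$ is symmetric under $(p,q)\leftrightarrow(q,p)$).
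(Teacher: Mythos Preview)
Your argument is correct and essentially the same as the paper's: both reduce to showing that the hermitian spaces $(V_\R,h)$ and $(V_\R,-h)$ are not isometric, using that the signature $(n,1)$ differs from $(1,n)$ for $n>1$. The only cosmetic difference is that the paper phrases this as the image of $\delta$ being trivial (citing \cite{BookInvol} for the description of $\operatorname{im}\delta$ in terms of scalars $\alpha$ with $(V,\alpha h)\cong(V,h)$), whereas you move one step further along the exact sequence and check that $H^1(\R,\CG)\to H^1(\R,\G)$ has trivial kernel.
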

\begin{proof}
        We have to show that  $\delta: \aG(\R) \to H^1(\R, \CG)$ has trivial
        image; see \eqref{eq:delta}. Recall that $H^1(\R, \CG) =
        \R^\times/(\R^\times)^2$. Let $\G = \U(V, h)$. 
        By \cite[Prop. 12.20 and Sect.~31.A]{BookInvol} the image of $\delta$
        corresponds (modulo squares) to elements $\alpha \in \R^\times$  such that $(V, \alpha h)$ 
        is isomorphic to $(V, h)$. For $h$ of signature $(n,1)$ with $n>1$ this
        requires $\alpha > 0$, whence the result. 
\end{proof}

%\begin{cor}
        %If the two principal arithmetic subgroups $\Lambda_P$ and $\Lambda_{P'}$
        %of $\G(k)$ are conjugate by element in $\aG(\R)$, they are conjugate by
        %an e.lement in $\G(\R)$.
%\end{cor}

\subsection{Counting the conjugacy classes}

Let $\Gamma \subset \Sp(n,1)$ be a nonuniform (resp. uniform) lattice that realizes the smallest  covolume.
Then by Sect.~\ref{sec:maximal-sbgps} we have $\Gamma =
N_{\Sp(n,1)}(\Lambda_P)$, where  $P = (P_v)$ is a coherent collection of parahoric 
subgroups  $P_v \subset \G(k_v)$ of maximal volume for each $v \in \Vf$, and
$\G$ is precisely the admissible $k$-group that determines $\Gamma^s_n$ (resp.
$\Delta_n$). For $v \in \Ram$ this determines the type $\Theta_v$ of $P_v$
uniquely (see Lemma~\ref{lemma:descript-local-minimal}), and for $v \notin \Ram$ we have that $\Theta_v$ is one of the two
conjugate hyperspecial types. These two hyperspecial types are conjugate by
$\aG(k_v)$. 
%It follows from Sect.~\ref{sec:parah-sbgps} and Lemma
%that the type $\Theta = (\Theta_v)$ of
%$P$ is the same for any nonuniform (resp. uniform) lattice realizing the 
%smallest covolume. 
Up to $\aG(k)$-conjugacy the number of  principal
arithmetic subgroups $\Lambda_P$ with such a type $\Theta = (\Theta_v)$ is given by   
the order of following \emph{class group} (see for instance \cite[Sect.~6.2]{BelEme}):
\begin{align}
        \CP = \frac{\prod'_{v\in\Vf} H^1(k_v, \CG)}{\delta(\aG(k)) \prod_{v \in
        \Vf} \delta(\oP)},
        \label{eq:CP}
\end{align}
where $\oP \subset \aG(k_v)$ is the stabilizer of $P_v$, and in the numerator
$\prod'$ denotes the restricted product with respect to the
collection of subgroups $\delta(\oP)$.

By Lemma \ref{lemma:Sp-surj-PSp} this order also gives an upper bound on the number of
$\G(\R)$-conjugacy classes of $\Lambda_P$ (note that $k \subset \R$). Thus the
uniqueness in Theorems \ref{thm:nonunif}--\ref{thm:unif} follows immediately from the
following.

\begin{prop}
        For $k= \Q(\sqrt{5})$  (resp. $k = \Q$),
        %with each $P_v \subset \G(k_v)$ of maximal volume,
        we have  $\CP = 1$.
\end{prop}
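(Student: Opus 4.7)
The approach relies on using $\CG = \mu_2$ together with Kummer theory to identify $H^1(K, \mu_2) = K^\times/(K^\times)^2$ for any field $K$ of characteristic $\ne 2$. Under this identification the connecting map $\delta$ becomes the similarity-factor map of the hermitian form, and every subgroup entering the definition~\eqref{eq:CP} of $\CP$ becomes an explicit subgroup of a product of multiplicative groups modulo squares. The statement then reduces to a computation with class groups and units, in the spirit of Lemma~\ref{lemma:index}.

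The first step is to analyze the local images $\delta(\oP)$. For $v \in \Ram$ one has $\Aut(\Delta_v) = 1$ by Sect.~\ref{sec:Galois-conj-action}, so $\oP$ is all of $\aG(k_v)$; Kneser's theorem $H^1(k_v, \G) = 1$ applied to the exact sequence~\eqref{eq:delta} then gives $\delta(\oP) = H^1(k_v, \CG)$, so ramified places contribute nothing to $\CP$. For $v \notin \Ram$ the parahoric $P_v$ is hyperspecial, and a standard Bruhat-Tits argument (of the type already used in Lemma~\ref{lemma:index}) shows that $\delta(\oP) \supseteq \o_v^\times\cdot (k_v^\times)^2/(k_v^\times)^2$, i.e. $\delta(\oP)$ absorbs all local units modulo squares. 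Consequently, representatives in the numerator of $\CP$ can be chosen to be supported at finitely many unramified places with valuation $0$ or $1$; after quotienting further by $\delta(\aG(k))$, using Lemma~\ref{lemma:A-dGk}, the resulting class is controlled by $\mathrm{Cl}(k)/2\mathrm{Cl}(k)$ and $U_k^+/U_k^2$, exactly as in the bound of Lemma~\ref{lemma:index}.

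For the two cases at hand these invariants vanish. If $k=\Q$ then $h_\Q = 1$ and $U_\Q^+/U_\Q^2 = 1$ trivially. If $k = \Q(\sqrt{5})$ then again $h_k = 1$, while the fundamental unit $\epsilon = (1+\sqrt{5})/2$ satisfies $N_{k/\Q}(\epsilon)=-1$; hence $\epsilon$ itself is not totally positive, whereas $\epsilon^2$ is, giving $U_k^+ = \langle \epsilon^2\rangle = U_k^2$ and so $U_k^+/U_k^2 = 1$. In both cases one concludes $\CP = 1$. The main obstacle I anticipate is a clean justification of $\delta(\oP) \supseteq \o_v^\times \cdot(k_v^\times)^2$ at hyperspecial places uniformly in $v$, in particular at dyadic primes where $\o_v^\times/(\o_v^\times)^2$ is larger than elsewhere; this should be handled by identifying $\oP$ with the stabilizer of the hyperspecial vertex in the adjoint group $\aG(k_v)$ and reading off its structure from the tables in \cite[Sect.~3.5.2]{Tits79}.
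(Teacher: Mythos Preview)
Your approach is essentially the same as the paper's: both identify $H^1(\cdot,\mu_2)$ with square classes, observe that $\delta(\oP)$ is everything at $v\in\Ram$ (since $\Aut(\Delta_v)=1$) and contains $\o_v^\times(k_v^\times)^2/(k_v^\times)^2$ at hyperspecial $v$, and then reduce $\CP$ to a narrow-class-group quantity that vanishes because $h_k=1$ together with the unit computation (the paper phrases this id\`elically as $\J/(k^{(+)}\Jo\J^2)$ being a quotient of $\mathrm{Cl}(k)$, while you phrase it via $\mathrm{Cl}(k)/2$ and $U_k^+/U_k^2$; your computation $N(\epsilon)=-1\Rightarrow U_k^+=U_k^2$ is exactly what makes the paper's ``multiply by $-1$'' step work). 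Your anticipated obstacle is not one: the inclusion $\delta(\oP)\supseteq \o_v^\times(k_v^\times)^2/(k_v^\times)^2$ at hyperspecial places --- including dyadic ones --- is precisely \cite[Prop.~2.7]{BorPra89}, which the paper invokes directly rather than rederiving from the tables in \cite{Tits79}.
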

\begin{proof}
        %Let $\Ram$ be the set of finite places where $\G$ does not split.
        We have $H^1(k_v, \CG) = k_v^\times/(k_v^\times)^2$. 
        The subgroup $\delta(\oP) \subset H^1(k_v, \CG)$ equals the type
        stabilizer $H^1(k_v, \CG)_{\Theta_v}$. If $v \in \Ram$ then
        $\Aut(\Delta_v) = 1$  (see Table~\ref{tab:local-Dynkin}), so that this
        stabilizer is trivially the whole $H^1(k_v, \CG)$. 
        For $v \notin \Ram$ we have $\delta(\oP) = \o^\times_v
        (k_v^\times)^2/(k_v^\times)^2$ by \cite[Prop.~2.7]{BorPra89}. Thus, $\CP$ is
        a quotient of 
        \begin{align}
                \label{eq:CP-prime}
               \CP' &= \frac{\prod'_{v\in\Vf}k_v^\times/(k_v^\times)^2}{\delta(\aG(k)) \prod_{v \in \Vf} \o^\times_v
               (k_v^\times)^2/(k_v^\times)^2}.
        \end{align}
        Now by Lemma~\ref{lemma:Sp-surj-PSp} and the proof of Lemma~\ref{lemma:A-dGk}  we have that
        $\delta(\aG(k)) = \delta(\aG(k))' = k^{(+)}/(k^\times)^2$,
        where 
        \begin{align}
                k^{(+)}=\left\{x \in k^\times \;|\; x_v > 0 \quad \forall\, v \in
                        \Vi \right\}.
        \end{align}
        %(Note that in Lemma \ref{lemma:A-dGk} the group considered, $\delta(\aG(k))'$, is
        %a priori smaller than $\delta(\aG(k))$,  for which the condition $x_v > 0$ is relaxed
        %at the place $v = \id$.)
        From \eqref{eq:CP-prime} we obtain an isomorphism 
        \begin{align*}
                \CP' &\cong \J/\left(k^{(+)} \Jo \J^2 \right),
        \end{align*}
        where $\J$ is the group of finite id\`eles of $k$, and $\Jo \subset \J$
        its subgroup consisting of  integral id\`eles.
        For both $k = \Q$ and $k = \Q(\sqrt{5})$ the unit group $U_k$ contains a representative 
        of each class of $k^\times/k^{(+)}$. Thus  
        \begin{align*}
                k^{(+)} \Jo &= k^{(+)} U_k \Jo \\
                          &= k^\times \Jo,
        \end{align*}
        %For $k = \Q$ we have trivially $k^{(+)} = \Q^\times$. For $k =
        %\Q(\sqrt{5})$, by multiplying $k^{(+)}$ by $-1$, we see that $k^{(+)} \Jo = k^\times \Jo$.
        %Thus, in both cases one has
        so that
        \begin{align*}
                \CP' &\cong \J/\left(k^\times \Jo \J^2 \right).
        \end{align*}
        But the latter is a quotient of the class group of $k$ (see
        \cite[Sect.~1.2.1]{PlaRap94}), which is trivial here. 
\end{proof}

\bibliographystyle{amsplain}
\bibliography{vol-Sp.bbl}

%\bibliography{../../tex/emery-bib.bib}

\end{document}